\documentclass[11pt,reqno,UTF8,twoside]{amsart}
\usepackage{mathrsfs}
\usepackage{amsfonts,amssymb,amsmath,amsthm}
\usepackage{cite}
\usepackage[colorlinks=true,citecolor=red]{hyperref}
\usepackage{titletoc}
\usepackage{geometry}
\usepackage{bm}
\usepackage{indentfirst}
\usepackage{graphicx}
\usepackage{float} 
\usepackage{booktabs}
\usepackage{longtable}
\usepackage{subfigure}
\usepackage{stmaryrd}
\usepackage{enumerate}
\usepackage{tikz}
\usepackage{ulem}
\usepackage{color,soul}
\usepackage{setspace}
\usepackage[pagewise]{lineno} 
\allowdisplaybreaks[2]
\usepackage{appendix}
\usepackage{cases}
\usepackage{todonotes}
\usepackage{enumitem}

\numberwithin{equation}{section}
\newtheorem{theorem}{Theorem}[section]
\newtheorem{lemma}[theorem]{Lemma}
\newtheorem{corollary}[theorem]{Corollary}
\newtheorem{proposition}[theorem]{Proposition}

\newtheorem{remark}[theorem]{Remark}
\newtheorem{definition}[theorem]{Definition}

\newcommand{\N}{\mathbb{N}}
\newcommand{\R}{\mathbb{R}}

\newcommand{\E}{\mathbb{E}}

\newcommand{\dvg}{\operatorname{div}}

\newcommand{\runum}[1]{\romannumeral #1}
\hypersetup{linkcolor=blue}

\makeatletter
\@namedef{subjclassname@2020}{\textup{2020} Mathematics Subject
Classification}
\makeatother

\begin{document}
     
   \title[ Asymptotic strong Feller and weak observability inequality]
	{{\Large A{\MakeLowercase{symptotic} s\MakeLowercase{trong} F\MakeLowercase{eller and }w\MakeLowercase{eak }o\MakeLowercase{bservability} i\MakeLowercase{nequality}}
    }}

    \author[Z. Liu, S. Xiang]{ {\small Z\MakeLowercase{iyu} L\MakeLowercase{iu}, S\MakeLowercase{hengquan} X\MakeLowercase{iang}}}
    
    \address[Ziyu Liu]{School of Mathematics and Physics, University of Science and Technology Beijing, 100083, Beijing, China.}
    \email{ziyu@ustb.edu.cn}

    \address[Shengquan  Xiang]{School of Mathematical Sciences, Peking University, 100871, Beijing, China.}
    \email{shengquan.xiang@math.pku.edu.cn}
    
\begin{abstract}

For a class of non-autonomous linear SPDEs, we establish the equivalence among asymptotic regularization, weak observability, and approximate null controllability for the associated deterministic control systems. This equivalence provides a deterministic control-theoretic characterization of stochastic smoothing and offers a systematic approach to studying SPDEs driven by spatially localized noise.  We further establish a criterion for semilinear SPDEs based on weak observability of the linearized equations. Our approach combines methods from PDE control theory with Malliavin calculus. As applications, we consider the stochastic Oseen equation, non-autonomous uniformly parabolic equations, and the parabolic Sine--Gordon equation, all driven by finite-dimensional, spatially localized white-in-time noise.

\end{abstract}

  \subjclass[2020]{
    60H15, 
    60H07, 
    93B05,  
    93C20. 
    }
	
        \keywords{Asymptotic strong Feller;  weak observability inequality; $\alpha$-null controllability; Malliavin calculus}

  \maketitle
    \setcounter{tocdepth}{1}
    \tableofcontents

\section{Introduction} \label{Sec 1}

  \subsection{Background and motivation}\label{Sec 1.1}
   
The asymptotic strong Feller property, introduced by Hairer and Mattingly \cite{HM-06,HM-08}, is an  important notion of asymptotic regularity for Markov semigroups. It is a standard tool for the study of ergodic and mixing properties, and it weakens the classical strong Feller property;  see e.g. \cite{DPZ-96,PZ-95}. This weaker form is well suited to SPDEs with degenerate noise, where strong Feller is often unavailable.

\vspace{2mm} 

In the works \cite{HM-06,HM-08,HM-11b}, Hairer and Mattingly investigated ergodicity and mixing for the 2D stochastic Navier--Stokes equations, focusing on the highly degenerate setting in which the noise acts only on four Fourier modes, and introduced the asymptotic strong Feller property. In the study of mixing for Navier--Stokes equations driven by random forcing,  Shirikyan considered another direction based on coupling methods, where the noise is localized in physical space and bounded in time \cite{Shi-15,Shi-21}. Along this line, the authors of the present paper, together with their collaborators, recently obtained mixing results for wave and dispersive equations with spatially localized bounded noise \cite{LWXZZ-24,CXZZ-25}.

More recently, motivated by statistical properties of turbulent models under localized white-noise forcing, our work \cite{LXZ-26} took a first step in this direction by proving exponential mixing for the 1D stochastic Allen--Cahn equation with localized white noise.

\vspace{2mm}

For SPDEs with degenerate noise, proving the asymptotic strong Feller property is often delicate. A standard route, provided in \cite{HM-06,HM-08}, is to establish quantitative asymptotic regularization estimates for the Markov semigroup. In those works, this was achieved by combining Malliavin calculus with a detailed analysis of the Malliavin matrix.

\begin{definition}\label{def1}
Let $\{X_t\}_{t\geq0}$ be a Markov process on a Hilbert space $H$, and let $\{P_t\}_{t\geq0}$ be its Markov semigroup. Let $\alpha\in(0,1)$ and $T>0$. We say that $\{P_t\}_{t\geq0}$ is $\alpha$-asymptotically regular\footnote{This type of gradient estimate was introduced by Hairer and Mattingly \cite{HM-06,HM-08}, which captures the asymptotic regularizing property of the Markov semigroup. The label used here is only for convenience of exposition.} in time $T$ if there exists a constant $C>0$ such that, for any $x\in H$ and $\varphi\colon H\to\mathbb R$ with $\|\varphi\|_\infty$ and $\|\nabla\varphi\|_\infty$ finite, one has
\begin{equation*}
    |\nabla P_T\varphi(x)|\leq C\|\varphi\|_\infty+\alpha\|\nabla\varphi\|_\infty.
\end{equation*}
\end{definition}

Indeed, such asymptotic regularization  estimates can be linked, through Malliavin calculus, to PDE control theory; see e.g. \cite{LX-26,LXZ-26}. In this paper, we further prove that, for linear equations, this $\alpha$-asymptotic regularity is {\it equivalent} to the $\alpha$-weak observability and $\alpha$-null controllability (central problems in control theory; see Definitions~\ref{def2}, \ref{def3} below). 

\subsection{SPDE setting and PDE control}

For clarity, we first present the basic autonomous linear setting. The full non-autonomous version will be stated in Section~\ref{Sec 2}.

Let $(H,\langle\cdot,\cdot\rangle)$ and $U$ be two separable Hilbert spaces. We consider the abstract linear SPDE
\begin{equation}\label{abs equation}
\begin{cases}
     dX=AXdt+BdW_t,\quad t>0,\\
     X_0=x,
\end{cases}
\end{equation}
where $A:D(A)\subset H\to H$ generates a strongly continuous semigroup $\{S(t)\}_{t\geq0}$ on $H$, $W$ is a cylindrical Wiener process on $U$, and $B\in\mathcal L(U;H)$. Assume there exists $\beta\in(0,1/2)$ such that
\begin{equation*}
    \int_0^Tt^{-2\beta}\|S(t)B\|_{\mathcal L_2(U;H)}^2dt<\infty,
    \qquad \forall\,T>0,
\end{equation*}
where $\mathcal L_2(U;H)$ denotes the Hilbert--Schmidt operators from $U$ to $H$. Then by \cite{DPZ-96}, for any $x\in H$ and $T>0$, equation \eqref{abs equation} admits a unique mild solution
$X\in L^2(\Omega;C([0,T];H))$ satisfying
\begin{equation*}
    X_t=S(t)x+\int_0^t S(t-r)BdW_r,\qquad t\in[0,T].
\end{equation*} 

The gradient estimate in Definition~\ref{def1} is closely related to a deterministic control problem acting through the same directions as the noise. We therefore consider
\begin{equation}\label{control equation}
\begin{cases}
    dY=AYdt+Bh(t)dt,\quad t\in(0,T),\\
    Y_0=y,
\end{cases}
\end{equation}
where $h\in L^2(0,T;U)$ is the control. 

The associated notion  is the following $\alpha$-null controllability.   
\begin{definition}\label{def2}
Let $\alpha\in(0,1)$ and $T>0$. We say that equation \eqref{control equation} is $\alpha$-null controllable in time $T$ if there exists a constant $C>0$ such that, for any $y\in H$, there exists $h\in L^2(0,T;U)$ satisfying
\begin{equation*}
    \|Y_T\|\leq \alpha\|y\|,\qquad\|h\|_{L^2(0,T;U)}\leq C\|y\|.
\end{equation*}
\end{definition}

The duality between controllability and observability is well-known; see Coron \cite{Coron-07}. In this autonomous setting, the duality between weak observability and  $\alpha$-null controllability is established in \cite{LR-95, TWX-20}. This viewpoint  turns stabilization and approximate controllability questions into quantitative observability inequalities, which are often accessible by PDE methods. 

\begin{definition}\label{def3}
Let $\alpha\in(0,1)$ and $T>0$. We say that the $\alpha$-weak observability inequality holds in time $T$ if there exists a constant $C>0$ such that, for any $z\in H$,
\begin{equation*}
    \|S(T)^*z\|\leq C\|B^*S(T-\cdot)^*z\|_{L^2(0,T;U)}+\alpha\|z\|.
\end{equation*}
\end{definition}

In particular, observability inequalities of this form, with a remainder term, already appeared in Lebeau--Robbiano's work \cite{LR-95},
\begin{equation*}
    \partial_tu=\Delta u+\mathbf 1_{\omega}Q_Nf,
\end{equation*} 
where $Q_N$ denotes the projection onto the low-frequency subspace $E_N\subset L^2(D)$.  There, a spectral inequality
\begin{equation}\label{LR-low}
    \|z\|_{L^2(D)}\leq C_{N}\|\mathbf 1_{\omega}z\|_{L^2(D)}, \qquad z\in E_N,
\end{equation}
together with high-frequency dissipation quickly yields the weak observability
\begin{equation}\label{LR-low-weak}
    \|e^{T\Delta}z\|_{L^2(D)}\leq C_{\alpha}\|Q_N \mathbf 1_{\omega}e^{(T-\cdot)\Delta}z\|_{L^2(0,T;L^2(D))}+\alpha \|z\|_{L^2(D)},\qquad z\in L^2(D).
\end{equation}
The full observability inequality is then obtained by combining \eqref{LR-low-weak} with an infinite iteration in time; see e.g. \cite{Miller-10,Xiang-24} for further developments of this strategy from different perspectives. Weak observability inequalities also arise in many other contexts, including results in the whole space \cite{AM-22,WWZZ-19}, as well as discretization problems \cite{Zuazua04}.

Weak observability inequalities are also closely related to uncertainty principles, a nonzero function and its Fourier frequency  cannot be both sharply localized. In the heat equation example illustrated above, this is reflected in \eqref{LR-low}.\footnote{ We remark that there is another notion of weak observability for hyperbolic and other conservative systems. It often denotes observability with a loss of derivatives, formulated in a weaker energy topology.}

\subsection{Main results}
The basic linear relation between asymptotic regularization, weak observability and $\alpha$-null controllability is as follows, which is proved in Section~\ref{Sec 2} as the autonomous special case of Theorem~\ref{thm 2}.

\begin{theorem}\label{thm 1}
Let $T>0$. The following three statements are equivalent.
\begin{itemize}
    \item[$(\runum{1})$] {\rm($\alpha$-asymptotic regularity)} There exists $\alpha_1\in(0,1)$ such that the Markov semigroup $\{P_t\}_{t\geq 0}$ associated with equation \eqref{abs equation} is $\alpha_1$-asymptotically regular in time $T$.

    \vspace{1mm}

    \item[$(\runum{2})$] {\rm($\alpha$-weak observability)} There exists $\alpha_2\in(0,1)$ such that the $\alpha_2$-weak observability inequality holds in time $T$.

    \vspace{1mm}

    \item[$(\runum{3})$] {\rm($\alpha$-null controllability)} There exists $\alpha_3\in(0,1)$ such that equation \eqref{control equation} is $\alpha_3$-null controllable in time $T$.
\end{itemize}
Moreover, when $(\runum{2})$ or $(\runum{3})$ holds, the parameters $\alpha_1,\alpha_2,\alpha_3$ can be chosen to be the same.
\end{theorem}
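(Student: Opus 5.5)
The plan is to prove (ii)$\Leftrightarrow$(iii) by a duality argument and (i)$\Leftrightarrow$(ii) by an explicit computation of $\nabla P_T\varphi$. In both, we exploit that for the linear equation \eqref{abs equation} the Malliavin derivative of $X_T$ is deterministic and coincides with the control-to-state map. Write $L_T h:=\int_0^T S(T-r)Bh(r)\,dr$, so that $L_T\in\mathcal L(L^2(0,T;U);H)$ and $(L_T^*z)(r)=B^*S(T-r)^*z$. The solution of \eqref{control equation} is then $Y_T=S(T)y+L_Th$.

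\textbf{(iii)$\Rightarrow$(ii).} Given $z$, we take $h$ from Definition~\ref{def2} with $y$ arbitrary and write
\[
\langle S(T)y,z\rangle=\langle Y_T,z\rangle-\langle h,L_T^*z\rangle .
\]
This is bounded by $\alpha_3\|y\|\|z\|+C\|y\|\|L_T^*z\|$. Taking the supremum over $\|y\|=1$ gives (ii) with $\alpha_2=\alpha_3$.

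\textbf{(ii)$\Rightarrow$(iii).} Here we use the standard penalized (HUM-type) functional
\[
J(z)=\tfrac12\|L_T^*z\|^2+\tfrac{\alpha}{1}\|z\|\cdot(\text{suitable})+\langle S(T)y,z\rangle .
\]
Equivalently, and more cleanly, we argue by Hahn--Banach and range inclusion. Consider the operators $z\mapsto S(T)^*z$ and $z\mapsto (L_T^*z,\alpha z)$ into $L^2(0,T;U)\times H$ with the norm $C\|\cdot\|+\|\cdot\|$. The weak observability inequality says that $\|S(T)^*z\|\le\|(CL_T^*z,\alpha z)\|_{\ell^1}$. By the Douglas/range-comparison lemma (taking the dual of the $\ell^1$ norm as a max-norm), this yields: for every $y$ there exist $h,w$ with
\[
S(T)y=-L_T h - w,\qquad \max\{\|h\|/C,\ \|w\|/\alpha\}\le\|y\|.
\]
Setting $Y_T=w$ gives (iii) with the same $\alpha$. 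The one subtle point is that the comparison lemma produces an exact, not merely approximate, decomposition. For this I would minimize
\[
J(z)=\tfrac12\big(C\|L_T^*z\|+\alpha\|z\|\big)^2+\langle S(T)y,z\rangle,
\]
which is convex, continuous and coercive by (ii). A minimizer exists by weak lower semicontinuity, and its Euler--Lagrange subdifferential condition supplies the required $h$ and $w$.

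\textbf{(ii)$\Leftrightarrow$(i).} We have $X_T^x=S(T)x+\int_0^TS(T-r)B\,dW_r$, so that
\[
\nabla P_T\varphi(x)\cdot\xi=\E\,\nabla\varphi(X_T)\cdot S(T)\xi .
\]
\emph{(iii)$\Rightarrow$(i).} Pick $h$ controlling $y=\xi$ with the sign chosen so that $S(T)\xi+L_Th=Y_T$. Then
\[
\E\,\nabla\varphi(X_T)\cdot S(T)\xi=-\E\,\nabla\varphi(X_T)\cdot L_Th+\E\,\nabla\varphi(X_T)\cdot Y_T .
\]
Since $D_hX_T=L_Th$, the first term equals $-\E\,D_h\varphi(X_T)=-\E\,\varphi(X_T)\int_0^T\langle h,dW\rangle$ by Malliavin integration by parts (Bismut--Elworthy--Li). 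It is therefore bounded by $\|\varphi\|_\infty\|h\|\le C\|\varphi\|_\infty\|\xi\|$. The second term is at most $\alpha\|\nabla\varphi\|_\infty\|\xi\|$.

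\emph{(i)$\Rightarrow$(ii).} This is the main obstacle: the converse must extract a deterministic inequality from the gradient bound. I would test (i) with functions that are nearly linear on the bulk of the Gaussian law of $X_T^x$, namely
\[
\varphi(u)=\varepsilon\,\sin(\langle u,z\rangle/\varepsilon)\quad\text{or}\quad \varphi(u)=\cos(\lambda\langle u,z\rangle).
\]
These have explicit Gaussian expectations. Indeed, $\langle X_T,z\rangle$ is Gaussian with mean $\langle S(T)x,z\rangle$ and variance $\|L_T^*z\|^2$, so
\[
P_T\varphi(x)=\cos\big(\lambda\langle S(T)x,z\rangle\big)\,e^{-\lambda^2\|L_T^*z\|^2/2}.
\]
Differentiating in direction $\xi$ at a point where the sine factor equals $1$, applying (i) with $\|\varphi\|_\infty=1$ and $\|\nabla\varphi\|_\infty=\lambda\|z\|$, and taking the supremum over $\xi$, yields
\[
\lambda\|S(T)^*z\|\,e^{-\lambda^2\|L_T^*z\|^2/2}\le C+\alpha_1\lambda\|z\|.
\]
We normalize $\|z\|=1$. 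If $\lambda\|L_T^*z\|\le1$, this gives $\|S(T)^*z\|\le e^{1/2}(C/\lambda+\alpha_1)$. Choosing $\lambda=1/\|L_T^*z\|$ (or $\lambda$ large when $L_T^*z=0$) produces
\[
\|S(T)^*z\|\le C'\|L_T^*z\|+e^{1/2}\alpha_1 .
\]
The constant $e^{1/2}$ may exceed $1/\alpha_1$, so I expect the sharp constant to require a better choice of test function. The statement only demands some $\alpha_2<1$, not $\alpha_2=\alpha_1$, in this direction. To obtain $\alpha_2<1$, I would choose $\lambda=\delta/\|L_T^*z\|$ with $\delta$ small, so that the factor $e^{\delta^2/2}\to1$, and absorb the cost into $C'=Ce^{\delta^2/2}/\delta$. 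This gives $\alpha_2=e^{\delta^2/2}\alpha_1<1$.
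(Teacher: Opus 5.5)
Your proposal is correct and follows essentially the paper's proof. The steps match one for one: (iii)$\Rightarrow$(ii) uses the same duality; (ii)$\Rightarrow$(iii) uses a range-comparison lemma that is exactly the paper's Hahn--Banach separation from the weakly compact convex set $L_T(\overline{B}(0,C\|y\|))+\overline{B}(0,\alpha\|y\|)$; (i)$\Rightarrow$(ii) uses the same Gaussian computation with trigonometric test functions and $\lambda=\delta/\|L_T^*z\|$, $\delta$ small; and (iii)$\Rightarrow$(i) uses Malliavin integration by parts.

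If you keep the alternative minimization route, coercivity already follows from the $\alpha\|z\|$ term, so (ii) is not what gives it. Hypothesis (ii) is what you need for the cost bound. With $N(z):=C\|L_T^*z\|+\alpha\|z\|$, pairing the optimality condition with the minimizer $z^*$ gives $N(z^*)^2=-\langle S(T)y,z^*\rangle\le\|y\|\,N(z^*)$, hence $\|h\|\le C\|y\|$ and $\|w\|\le\alpha\|y\|$.
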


\begin{remark}
{\rm
Theorem~\ref{thm 1} connects conditions from two different viewpoints. From the probabilistic side, asymptotic regularization is an important smoothing mechanism in the study of mixing and ergodicity for SPDEs driven by degenerate noise; see e.g. \cite{HM-06,FGRT-15,PZZ-24}. Indeed, for infinite-dimensional systems perturbed by degenerate or localized noise, the classical strong Feller property often fails, and one is naturally led to weaker asymptotic forms of regularization; see the discussion in \cite{GLLL-24}.

From the control viewpoint, this corresponds to weak observability.  When the control acts only through finitely many (localized) directions, the exact observability is usually out of reach and the natural estimate is of weak observability type \cite{LR-95}. This is in particular consistent with the probabilistic difficulty caused by  degenerate or localized noise, and is also linked to uncertainty principles,  given that only part of the system is directly forced. 

This connection provides a systematic approach to investigating the long-time dynamics of SPDEs driven by localized noise. In particular, it makes it possible to use tools developed in PDE control theory such as the moment method, Carleman estimates, and microlocal analysis to study such SPDE problems.

}
\end{remark}

\noindent {\bf Several generalizations.} The autonomous formulation above is chosen for clarity; the same approach extends in several directions, as explained below.

\begin{itemize}[leftmargin=2em]
    \item[1)] The same equivalence holds for {\it non-autonomous linear SPDEs}. This is proved in Theorem~\ref{thm 2}, where the Markov semigroup is replaced by a two-parameter Markov family. Such non-autonomous Markov families appear naturally in time-dependent SPDEs.

    \vspace{2mm}
    
    \item[2)] We also treat {\it semilinear equations with Lipschitz nonlinearities}. Theorem~\ref{thm 3} shows that asymptotic regularization follows from weak observability  for the  linearized equation. This can be viewed as an asymptotic counterpart of the classical strong Feller criterion for semilinear equations under controllability assumptions; see e.g. \cite[Theorem 7.2.4]{DPZ-96}.

    \vspace{2mm}

    \item[3)] The control/noise operator $B$ may also be {\it unbounded}. This is natural from the control viewpoint, as unbounded operators commonly arise from boundary controls; see e.g. \cite{MRT-26}. On the stochastic side, unbounded noise coefficients also appear in strong Feller regularization results; see e.g. \cite{LX-26,GP-26}.

    \vspace{2mm}

    \item[4)] In particular, this approach could be adapted to other {\it nonlinear SPDEs} driven by localized white noise. For general nonlinearities, verifying the required weak observability and Malliavin estimates usually depends on the specific equation and requires case-by-case analysis; see e.g. \cite{LXZ-26}.
\end{itemize}

\subsection{Applications} We now turn to three SPDE systems that are driven by physically localized and finite-dimensional white-in-time noise, illustrating respectively the autonomous linear, non-autonomous linear, and semilinear criteria.

\vspace{2mm}

\noindent {\it The stochastic Oseen equation.}
The first example is the stochastic Oseen equation,
\begin{equation*}
    \partial_tu-\nu\Delta u+(\overline U\cdot\nabla)u+(u\cdot\nabla)\overline U+\nabla p=\tfrac{dW}{dt},\qquad\dvg u=0,
\end{equation*}
where $\overline U$ is a prescribed stationary background flow. This equation is the linearization of the Navier--Stokes equations around $\overline U$, and its controllability is closely related to the controllability analysis of the Navier--Stokes system; see e.g. \cite{BT-25}.  Using Theorem \ref{thm 1}, we prove that it is $\alpha$-asymptotically regular for any $\alpha\in(0,1)$, once sufficiently many localized noise modes are forced. See Section \ref{Sec Oseen} for details.

 As the Oseen dynamics need not be exponentially stable, the asymptotic regularization cannot rely only on deterministic decay. The control argument compensates for possible unstable directions via weak observability: by forcing sufficiently many localized noise modes, the remainder can be made smaller than any prescribed $\alpha\in(0,1)$, yielding $\alpha$-asymptotic regularization.

\vspace{2mm}

\noindent {\it Non-autonomous uniformly parabolic equations.}
The second example concerns non-autonomous uniformly parabolic equations of the form
\begin{equation*}
    \partial_tu-\dvg(a(t,x)\nabla u)+b(t,x)\cdot\nabla u+c(t,x)u=\tfrac{dW}{dt},
\end{equation*}
with uniformly elliptic and sufficiently regular coefficients. Controllability of parabolic equations has been extensively studied, in particular through Carleman estimates; see e.g. \cite{FI-96,FCZ-00,FLZ-19}. Based on Theorem~\ref{thm 2}, we obtain its $\alpha$-asymptotic regularity. See Section \ref{Sec 5} for details.   

Non-autonomous SPDEs have been studied in recent years because their long-time dynamics have many phenomena, involving periodic, quasi-periodic, or more general time-dependent invariant structures. This example shows that the weak-observability mechanism is compatible with such time-dependent dynamics.

\vspace{2mm}

\noindent {\it The parabolic Sine--Gordon equation.}
The last example is the parabolic Sine--Gordon equation 
\begin{equation*}
    \partial_tu-\Delta u+\kappa\sin u=\tfrac{dW}{dt}.
\end{equation*}
It is a parabolic analogue of the classical Sine--Gordon model, which gives a semilinear equation with Lipschitz nonlinearity. By applying  the semilinear criteria in  Theorem~\ref{thm 3}, we prove its $\alpha$-asymptotic regularity. See Section \ref{Sec 6} for details.

For SPDEs with Lipschitz nonlinearities and non-degenerate noise, Da Prato and Zabczyk \cite{DPZ-96} provided classical conditions for ergodicity. Meanwhile, the present work deals with localized degenerate noise; the asymptotic regularization obtained above can then be combined with the asymptotic strong Feller criterion of \cite{HM-08} to derive exponential mixing in Wasserstein distances.

\subsection*{Organization of the paper}

This paper is organized as follows. Section~\ref{Sec 2} proves the equivalence between asymptotic regularization, weak observability and approximate null controllability for non-autonomous linear equations. Section~\ref{Sec 3} establishes an abstract criterion for semilinear equations. The remaining sections apply these criteria to concrete models: the stochastic Oseen equation in Section~\ref{Sec Oseen}, non-autonomous uniformly parabolic equations in Section~\ref{Sec 5}, and the parabolic Sine--Gordon equation in Section~\ref{Sec 6}.

    \section{Equivalence for linear equations} \label{Sec 2}

    In this section, we investigate the relation between the asymptotic strong Feller property and weak observability inequalities for non-autonomous linear systems, leading to Theorem~\ref{thm 2}. As a special case, the autonomous result stated in Theorem~\ref{thm 1} follows directly.

\vspace{2mm}
 Recall that $H$ and $U$ are two separable Hilbert spaces. We consider the non-autonomous linear SPDE on $H$
\begin{equation}\label{abs equation2}
\begin{cases}
     dX(t)=A(t)X(t)dt+G(t)dt+B(t)dW_t,\quad t>0,\\
     X(0)=x,
\end{cases}
\end{equation}
where $G\in L^2_{\mathrm{loc}}(\mathbb R^+;H)$ and $W$ is a cylindrical Wiener process on $U$. The family of linear operators
\begin{equation*}
    A(t):D(A(t))\subset H\to H,\qquad t\geq 0,
\end{equation*}
generates a strongly continuous evolution family $\{U(t,s):0\leq s\leq t<\infty\}$ on $H$. Namely,
\begin{equation*}
    U(t,t)=I,\qquad U(t,r)U(r,s)=U(t,s),\qquad 0\leq s\leq r\leq t,
\end{equation*}
and $(t,s)\mapsto U(t,s)x$ is continuous for every $x\in H$. We also assume that, for every $T>0$, there exist constants $M_T>0$ and $c_T\in\mathbb{R}$ such that
\begin{equation*}
\|U(t,s)\|_{\mathcal L(H)}\leq M_T \exp(c_T(t-s)),\qquad 0\leq s\leq t\leq T.
\end{equation*}
Assume that $B\in L^2_{\rm loc}(\R^+;\mathcal{L}(U;H))$ and that there exists a constant $\beta\in(0,1/2)$ such that 
\begin{equation*}
    \sup_{0\leq t\leq T}\int_0^t(t-s)^{-2\beta}\|U(t,s)B(s)\|_{\mathcal L_2(U;H)}^2ds<\infty\quad\forall\, T>0.
\end{equation*}

Under these settings, following \cite[Section 5]{DPZ-96} and performing a standard  factorization argument, the stochastic convolution $\int_0^t U(t,s)B(s)dW_s$ is well defined and admits an $H$-valued continuous version. Hence, for every $x\in H$ and $T>0$, the process defined by
\begin{equation*}
    X(t)=U(t,0)x+\int_0^tU(t,s)G(s)ds+\int_0^tU(t,s)B(s)dW_s,\qquad 0\leq t\leq T,
\end{equation*}
belongs to $L^2(\Omega;C([0,T];H))$ and is the unique mild solution of \eqref{abs equation2}.

\vspace{2mm}

To introduce the Markov family
associated with \eqref{abs equation2}, for $0\leq s\leq t$ and
$x\in H$, let $X^{s,x}(t)$ denote the mild solution starting from $x$
at time $s$, i.e.,
\begin{equation*}
    X^{s,x}(t)=U(t,s)x+\int_s^tU(t,r)G(r)dr+\int_s^tU(t,r)B(r)dW_r,\qquad t\geq s.
\end{equation*}
Then for every bounded Borel measurable function $\varphi\colon H\to\mathbb R$, the two-parameter Markov family $\{P_{s,t}\}_{t\geq s\geq 0}$  is defined as
\begin{equation*}
    P_{s,t}\varphi(x):=\mathbb E\varphi(X^{s,x}(t)),\qquad 0\leq s\leq t,\;x\in H.
\end{equation*} 

We also consider the following controlled linear equation
\begin{equation}\label{eq Y-non}
\begin{cases}
    dY(t)=A(t)Y(t)dt+B(t)h(t)dt,\quad t\in(s,T),\\
    Y(s)=y.
\end{cases}
\end{equation}

    \begin{theorem}\label{thm 2}
         Let $T>s\geq 0$. The following three statements are equivalent.
        \begin{itemize}
             \item[$(\runum{1})$] There exist constants $\alpha_1\in(0,1)$ and $C_1>0$ such that the Markov family $\{P_{s,t}\}_{t\geq s\geq 0}$ associated with equation \eqref{abs equation2} is $\alpha_1$-asymptotically regular from time $s$ to time $T$ in the following sense: for any $x\in H$ and  $\varphi\colon H\rightarrow \R$  with $\|\varphi\|_{\infty}$ and $\|\nabla\varphi\|_{\infty}$ finite,
        \begin{equation*}
        |\nabla P_{s,T}\varphi(x)|\leq C_1\|\varphi\|_{\infty}+\alpha_1\|\nabla\varphi\|_{\infty}.
        \end{equation*}  
            \item[$(\runum{2})$] There exist constants $\alpha_2\in(0,1)$ and $C_2>0$ such that, for any $z\in H$,
            \begin{equation*}
            \|U(T,s)^*z\|\leq C_2\|B(\cdot)^*U(T,\cdot)^*z\|_{L^2(s,T;U)}+\alpha_2\|z\|.
        \end{equation*}
          \item[$(\runum{3})$] There exist constants $\alpha_3\in(0,1)$ and $C_3>0$ such that, for any $y\in H$, there exists $h\in L^2(s,T;U)$ such that the solution $Y$ of equation \eqref{eq Y-non} satisfies
            \begin{equation*}
                \|Y(T)\|\leq\alpha_3\|y\|,\quad \|h\|_{L^2(s,T;U)}\leq C_3\|y\|.
            \end{equation*}
        \end{itemize}        
          Moreover, $\alpha_2,\alpha_3$ in $(\runum{2}),(\runum{3})$ can be chosen to be the same. Additionally, when $(\runum{2})$ or $(\runum{3})$ holds, $\alpha_1$ in $(\runum{1})$  can be chosen as $\alpha_2$ or $\alpha_3$, respectively.       
    \end{theorem}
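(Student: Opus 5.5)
We will prove the cycle of implications (ii)$\Leftrightarrow$(iii), (iii)$\Rightarrow$(i), and (i)$\Rightarrow$(ii), with the sharp transfer of constants along (ii)$\Rightarrow$(iii)$\Rightarrow$(i).

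\emph{Step 1: (ii)$\Leftrightarrow$(iii).} This is the standard duality between controllability and observability. Set $L_T h:=\int_s^T U(T,r)B(r)h(r)\,dr$, so that $Y(T)=U(T,s)y+L_Th$ and $L_T^*z=B(\cdot)^*U(T,\cdot)^*z$.

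For (ii)$\Rightarrow$(iii) we will not use a penalized-functional argument. Instead, we work on $H\times H$ and apply the range-inclusion lemma (Douglas/Dolecki--Russell) to the operators $\Phi z=(U(T,s)^*z,\,0)$ and $\Psi z=(L_T^*z,\,z)$, with suitable weights. The inequality $\|\Phi^*\cdot\|\le\ldots$ can be arranged by noting
\[
\|U(T,s)^*z\|\le C_2\|L_T^*z\|+\alpha_2\|z\|.
\]
Concretely, consider the map $(h,w)\mapsto L_Th+w$ from $L^2(s,T;U)\times H$ with norm $C_2\|h\|+\alpha_2\|w\|$; more precisely, use the $\ell^\infty$-dual structure. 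The adjoint estimate says $\|U(T,s)^*z\|\le \|\Psi z\|_{\ast}$, where the dual norm is $C_2\|L_T^*z\|+\alpha_2\|z\|$. By Hahn--Banach, for each $y$ there exist $(h,w)$ with $-U(T,s)y=L_Th+w$ and $\max(\|h\|/C_2,\|w\|/\alpha_2)\le\|y\|$. Then $Y(T)=-w$, giving $\|Y(T)\|\le\alpha_2\|y\|$ and $\|h\|\le C_2\|y\|$.

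The converse (iii)$\Rightarrow$(ii) is direct. Pair $Y(T)$ with $z$:
\[
\langle y,U(T,s)^*z\rangle=\langle Y(T),z\rangle-\langle h,L_T^*z\rangle\le \alpha_3\|y\|\|z\|+C_3\|y\|\|L_T^*z\|,
\]
and then take the supremum over $\|y\|=1$. This yields the same $\alpha$.

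\emph{Step 2: (iii)$\Rightarrow$(i).} Since $X^{s,x}(T)$ is affine in $x$, we have $\nabla P_{s,T}\varphi(x)\cdot y=\E[\nabla\varphi(X(T))\cdot U(T,s)y]$. Choose the control $h$ from (iii) for the direction $y$ and write
\[
U(T,s)y=Y(T)-L_Th.
\]
The term with $Y(T)$ contributes at most $\alpha_3\|\nabla\varphi\|_\infty\|y\|$. For the other term, $L_Th=\mathcal D^hX(T)$ is the Malliavin derivative of $X(T)$ in the deterministic direction $h$. By Malliavin integration by parts (here simply the Cameron--Martin/Girsanov formula, since $h$ is deterministic),
\[
\E[\nabla\varphi(X(T))\cdot L_Th]=\E\Big[\varphi(X(T))\int_s^T\langle h,dW\rangle\Big],
\]
which is bounded by $\|\varphi\|_\infty\|h\|\le C_3\|\varphi\|_\infty\|y\|$. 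This gives (i) with $\alpha_1=\alpha_3$.

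\emph{Step 3: (i)$\Rightarrow$(ii).} This is the main obstacle, because we must extract a deterministic inequality from a statement about all test functions. Take $\varphi(x)=\sin(\langle z,x\rangle)$ with $\|z\|=1$, scaled by $\lambda$, that is $\varphi_\lambda(x)=\sin(\lambda\langle z,x\rangle)$. Then $\|\varphi_\lambda\|_\infty\le 1$ and $\|\nabla\varphi_\lambda\|_\infty=\lambda$. Since $X(T)$ is Gaussian with mean $m_x$ and covariance $Q=L_TL_T^*$, we get explicitly
\[
P_{s,T}\varphi_\lambda(x)=e^{-\lambda^2\|L_T^*z\|^2/2}\sin(\lambda\langle z,m_x\rangle).
\]
Choosing $x$ so that the cosine factor in the gradient equals one, we obtain
\[
\lambda\|U(T,s)^*z\|\,e^{-\lambda^2\|L_T^*z\|^2/2}\le C_1+\alpha_1\lambda.
\]
If $\lambda\|L_T^*z\|\le\varepsilon$, this gives $\|U(T,s)^*z\|\le e^{\varepsilon^2/2}(C_1/\lambda+\alpha_1)$. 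So pick $\lambda$ large and $\varepsilon$ small so that $e^{\varepsilon^2/2}\alpha_1+e^{\varepsilon^2/2}C_1/\lambda=:\alpha_2<1$. If instead $\|L_T^*z\|>\varepsilon/\lambda$, then trivially
\[
\|U(T,s)^*z\|\le M e^{c(T-s)}\le (M e^{c(T-s)}\lambda/\varepsilon)\|L_T^*z\|.
\]
Combining the two cases and using homogeneity in $z$ yields (ii) with some $\alpha_2<1$, though not necessarily equal to $\alpha_1$. This matches the statement, which only claims equality of constants in the other direction.
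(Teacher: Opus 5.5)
Your proposal is correct and matches the paper's proof. It uses the same pairing argument for (iii)$\Rightarrow$(ii) and Hahn--Banach separation against the weakly compact set $L_T\bigl(\overline{B}(0,C_2\|y\|)\bigr)+\overline{B}(0,\alpha_2\|y\|)$ for (ii)$\Rightarrow$(iii); your range-inclusion phrasing amounts to this once the primal norm is fixed as $\max(\|h\|/C_2,\|w\|/\alpha_2)$ rather than the ``$H\times H$'' framing. It also uses Cameron--Martin integration by parts with the deterministic control for (iii)$\Rightarrow$(i), and sine test functions with the explicit Gaussian formula for (i)$\Rightarrow$(ii).

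The only variation is in that last step. You fix the frequency and split into cases using the bound on $\|U(T,s)\|$. The paper instead takes the $z$-adapted frequency $r=k\|L_T^*z\|^{-1}$ and shifts $\varphi$ by $\int_s^TU(T,l)G(l)\,dl$ so that evaluating at $x=0$ suffices. This gives $\alpha_2=\alpha_1e^{k^2/2}$ and $C_2=C_1k^{-1}e^{k^2/2}$ directly, without that bound.
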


\begin{remark} The proof below shows that, the assumption  $(\runum{1})$ can be indeed replaced by a weaker version, namely, its pointwise version at the origin, 
\begin{itemize}
             \item[$(\runum{1}')$] There exist constants $\alpha_1\in(0,1)$ and $C_1>0$ such that the Markov family $\{P_{s,t}\}_{t\geq s\geq 0}$ associated with equation \eqref{abs equation2} satisfies that for $\varphi\colon H\rightarrow \R$  with $\|\varphi\|_{\infty}$ and $\|\nabla\varphi\|_{\infty}$ finite,
              \begin{equation*}
        |\nabla P_{s,T}\varphi(0)|\leq C_1\|\varphi\|_{\infty}+\alpha_1\|\nabla\varphi\|_{\infty}.
        \end{equation*}  
        \end{itemize}

\end{remark}

\begin{proof}
The proof consists of four parts. In the first two steps, we quickly prove the equivalence between the weak observability inequality and approximate null controllability for the linear control system, which is standard. Next, we derive the weak observability inequality from the asymptotic regularity estimate by testing it on oscillatory Gaussian functions. Finally, we use approximate null controllability and Malliavin integration by parts to obtain the gradient estimate for the Markov family.

To begin with, define
\begin{align*}
    &L_{s,T}\colon L^2(s,T;U)\rightarrow H,\quad  L_{s,T}h:=\int_s^T U(T,r)B(r)h(r)dr,\\
    &L_{s,T}^*\colon H\rightarrow L^2(s,T;U),\quad (L_{s,T}^*z)(r)=B(r)^*U(T,r)^*z,\quad r\in(s,T).
\end{align*}
For simplicity, write $L=L_{s,T}$ and $L^*=L_{s,T}^*$.

\vspace{2mm}

\noindent $(\runum{3})\Rightarrow(\runum{2}).$ Since
\begin{equation*}
    Y(T)=U(T,s)y+Lh,
\end{equation*}
statement $(\runum{3})$ is equivalent to the following assertion: for any $y\in H$, one can find $h\in L^2(s,T;U)$ such that
\begin{equation*}
    \|Lh+U(T,s)y\|\leq\alpha_3\|y\|,\quad \|h\|_{L^2(s,T;U)}\leq C_3\|y\|.
\end{equation*}
Therefore we derive that for any $y,z\in H$,
\begin{align*}
    |\langle U(T,s)^*z,y\rangle|&=|\langle z,U(T,s)y+Lh\rangle-\langle z,Lh\rangle|\leq|\langle z,U(T,s)y+Lh\rangle|+|\langle L^*z,h\rangle|\\
    &\leq \alpha_3\|z\|\|y\|+C_3\|L^*z\|\|y\|.
\end{align*}
Thus by taking the supremum over all $\|y\|\leq 1$, we derive that 
\begin{equation*}
    \|U(T,s)^*z\|\leq C_3\|L^*z\|_{L^2(s,T;U)}+\alpha_3\|z\|,
    \quad \forall\,z\in H,
\end{equation*} 
which implies $(\runum{2})$ with $\alpha_2=\alpha_3$, $C_2=C_3$.

\vspace{2mm}

\noindent $(\runum{2})\Rightarrow(\runum{3}).$ For any fixed $y\in H$, we define the convex set $K_y$ by 
\begin{equation*}  
K_y:=L\left(\overline{B}_{L^2(s,T;U)}\left(0,C_2\|y\|\right)\right)+\overline{B}_H\left(0,\alpha_2\|y\|\right).
\end{equation*}
Since $L^2(s,T;U)$ and $H$ are Hilbert spaces, the two closed balls above are weakly compact. As $L$ is bounded and linear, the image of the first ball under $L$ is weakly compact. Hence $K_y$ is weakly compact and therefore norm closed. The support function of $K_y$ is defined by
\begin{equation*}
    \sigma_y(z):=\sup_{k\in K_y}\langle z,k\rangle=C_2\|L^*z\|_{L^2(s,T;U)}\|y\|+\alpha_2\|z\|\|y\|,\quad z\in H.
\end{equation*}
Then, for any $y,z\in H$, one has
\begin{equation*}
    \langle -U(T,s)y,z\rangle\leq \|y\|\|U(T,s)^*z\|\leq C_2\|L^*z\|_{L^2(s,T;U)}\|y\|+\alpha_2\|z\|\|y\|=\sigma_y(z).
\end{equation*}
If $-U(T,s)y\notin K_y$, then by the Hahn–Banach separation theorem, there exists $z_0\in H$ such that 
\begin{equation*}
   \langle-U(T,s)y,z_0\rangle>\sup_{k\in K_y}\langle z_0,k\rangle=\sigma_y(z_0), 
\end{equation*}
which contradicts the preceding inequality. Thus $-U(T,s)y\in K_y$.  Hence there exist $h\in L^2(s,T;U)$ and $w\in H$ such that
\begin{equation*}
    \|w\|\leq \alpha_2\|y\|,\quad \|h\|_{L^2(s,T;U)}\leq C_2\|y\|,\quad -U(T,s)y=Lh+w.
\end{equation*}
This proves $(\runum{3})$ with $\alpha_3=\alpha_2$ and $C_3=C_2$.
 
\vspace{2mm}

\noindent $(\runum{1})\Rightarrow(\runum{2}).$ By assumption, for any $x\in H$ and $\varphi\colon H\rightarrow\R$ with $\|\varphi\|_{\infty}$ and $\|\nabla\varphi\|_{\infty}$ finite,
\begin{equation}\label{eq alpha1 nonauto}
    |\nabla P_{s,T}\varphi(x)|\leq C_1\|\varphi\|_{\infty}+\alpha_1\|\nabla\varphi\|_{\infty}.
\end{equation}
For any $r>0$ and $z\in H$, let
\begin{equation*}
    \varphi_{r,z}(x):=\sin\left(r\left\langle z,x-\int_s^T U(T,l)G(l)dl\right\rangle\right),
    \quad x\in H.
\end{equation*}
Then $\|\varphi_{r,z}\|_{\infty}=1$ and $\|\nabla\varphi_{r,z}\|_{\infty}\leq r\|z\|$. For the solution starting from $x$ at time $s$, we have
\begin{equation*}
    X^{s,x}(T)=U(T,s)x+\int_s^TU(T,r)G(r)dr+\int_s^TU(T,r)B(r)dW_r.
\end{equation*}
Thus $X^{s,x}(T)$ is a Gaussian random variable with mean $U(T,s)x+\int_s^T U(T,r)G(r)dr$ and covariance operator $LL^*$. Therefore, a direct computation gives
\begin{equation*}
    P_{s,T}\varphi_{r,z}(x)=\exp\left(-\tfrac{r^2}{2}\|L^*z\|_{L^2(s,T;U)}^2\right)\sin\left(r\langle z,U(T,s)x\rangle\right).
\end{equation*}
Consequently, for any $y\in H$,
\begin{equation*}
    \langle\nabla P_{s,T}\varphi_{r,z}(0),y\rangle=r\exp\left(-\tfrac{r^2}{2}\|L^*z\|_{L^2(s,T;U)}^2\right)\langle z,U(T,s)y\rangle.
\end{equation*}
If $\|L^*z\|_{L^2(s,T;U)}=0$, then \eqref{eq alpha1 nonauto} implies that
\begin{equation*}
    r|\langle z,U(T,s)y\rangle|\leq C_1+\alpha_1r\|z\|,\quad\forall\,y\in H,\;\|y\|\leq1,\;r>0.
\end{equation*}
Letting $r$ tend to $\infty$, one has
\begin{equation*}
    \|U(T,s)^*z\|\leq\alpha_1\|z\|.
\end{equation*}
If $\|L^*z\|_{L^2(s,T;U)}>0$, then taking $r=k\|L^*z\|_{L^2(s,T;U)}^{-1}$ with $k>0$ in \eqref{eq alpha1 nonauto}, we derive that
\begin{equation*}
    \|U(T,s)^*z\|\leq C_1k^{-1}e^{\frac{k^2}{2}}\|L^*z\|_{L^2(s,T;U)}+\alpha_1 e^{\frac{k^2}{2}}\|z\|.
\end{equation*}

Choosing $k>0$ sufficiently small so that $\alpha_1 e^{\frac{k^2}{2}}:=\alpha_2\in(0,1)$ then yields statement $(\runum{2})$.

\vspace{2mm}

\noindent $(\runum{3})\Rightarrow(\runum{1}).$ Similar to \cite{LXZ-26}, the proof invokes
the Malliavin calculus. To indicate the initial time, initial condition,
and the random input, we denote the solution by $X^{s,x}(t;W)$. For
$v\in L^2(s,T;U)$, define the Malliavin derivative along $v$ by
\begin{equation*}
    \langle \mathcal{D}X^{s,x}(t;W),v\rangle_{L^2(s,T;U)}=\lim\limits_{\varepsilon\rightarrow0}\frac{X^{s,x}(t;W+\varepsilon\int_s^\cdot v(r)dr)-X^{s,x}(t;W)}
    {\varepsilon},
    \quad t\in[s,T],
\end{equation*}
where the limit is taken in $L^2(\Omega;H)$.\footnote{We also use below the Malliavin chain rule: for $\varphi$ with  $\|\varphi\|_{\infty}$ and $\|\nabla\varphi\|_{\infty}$ finite, $\varphi(X^{s,x}(T))$ is Malliavin differentiable, and its Malliavin derivative is obtained by composing $\mathcal{D}X^{s,x}(T)$ with $\nabla\varphi(X^{s,x}(T))$; see, e.g., \cite{Nualart-06}.}  For $V(t):=\langle \mathcal{D}X^{s,x}(t;W),v\rangle_{L^2(s,T;U)}$, it satisfies
\begin{equation*}
\begin{cases}
dV(t)=A(t)V(t)dt+B(t)v(t)dt,\quad t\in(s,T),\\ 
V(s)=0.
\end{cases}
\end{equation*}
On the other hand, the linearization $Z$ of  equation \eqref{abs equation2}  satisfies\footnote{Here $Z$ is deterministic, since it is the linearization of a linear SPDE. In the nonlinear case, the corresponding formulation contains stochastic terms and is more involved; see, e.g., \cite{LXZ-26}.} 
\begin{equation*}
\begin{cases}
dZ(t)=A(t)Z(t)dt,\quad t\in(s,T),\\
Z(s)=y.
\end{cases}
\end{equation*}
In particular,
\begin{equation*}
Z(T)=U(T,s)y.
\end{equation*}

Set $R(t):=Z(t)-V(t)$, which satisfies
\begin{equation*}
\begin{cases}
dR(t)=A(t)R(t)dt-B(t)v(t)dt,\quad t\in(s,T),\\
R(s)=y.
\end{cases}
\end{equation*}

Applying $(\runum{3})$ to the initial datum $y\in H$ and then replacing $h$ by $-v$, we can choose $v\in L^2(s,T;U)$ such that
\begin{equation*}
\|R(T)\|\leq\alpha_3\|y\|,\quad\|v\|_{L^2(s,T;U)}\leq C_3\|y\|.
\end{equation*}

We now compute that
\begin{equation*}
\begin{aligned}
    |\langle \nabla P_{s,T}\varphi(x),y\rangle|
    &=\left|\E\langle\nabla\varphi(X^{s,x}(T)),Z(T)\rangle\right|\\
    &\leq\left|\E\langle\nabla\varphi(X^{s,x}(T)),V(T)\rangle\right|+\left|\E\langle\nabla\varphi(X^{s,x}(T)),R(T)\rangle\right|\\
    &=\left|\E\langle \mathcal{D}(\varphi(X^{s,x}(T))),v\rangle_{L^2(s,T;U)}\right|+\left|\E\langle\nabla\varphi(X^{s,x}(T)),R(T)\rangle\right|\\
    &=\left|\E\left(\varphi(X^{s,x}(T))\int_s^T v(t)dW_t\right)\right|+\left|\E\langle\nabla\varphi(X^{s,x}(T)),R(T)\rangle\right|\\
    &\leq\|\varphi\|_{\infty}\E\left|\int_s^T v(t)dW_t\right|+\|\nabla\varphi\|_{\infty}\E\|R(T)\|\\
    &\leq C_3\|\varphi\|_{\infty}\|y\|+\alpha_3\|\nabla\varphi\|_{\infty}\|y\|.
\end{aligned}
\end{equation*}
Here the fourth line follows from the integration by parts formula in Malliavin calculus, see e.g. \cite{Nualart-06}. Since $v\in L^2(s,T;U)$ is deterministic, $\int_s^T v(t)dW_t$ is the usual Itô integral. Taking the supremum over all $y\in H$ with $\|y\|\leq1$, we obtain
\begin{equation*}
    |\nabla P_{s,T}\varphi(x)|\leq C_3\|\varphi\|_{\infty}+\alpha_3\|\nabla\varphi\|_{\infty}.
\end{equation*}
This gives the desired asymptotic regularity estimate with $\alpha_1=\alpha_3$. 

The proof of Theorem~\ref{thm 2} is complete.
\end{proof}

    \section{Asymptotic strong Feller for nonlinear equations} \label{Sec 3}

    In this section, we establish a criterion for the asymptotic strong Feller property for nonlinear SPDEs in terms of a weak observability inequality; see Theorem~\ref{thm 3}.

    \vspace{2mm}

    Recall that $H$ and $U$ are two separable Hilbert spaces. Consider the following equation
    \begin{equation}\label{nl equation}
    \begin{cases}
        dX=AXdt+F(X)dt+BdW_t,\quad t>0,\\
        X_0=x.
    \end{cases}
    \end{equation}
    Here $A:D(A)\subset H\to H$ generates an analytic semigroup $\{S(t)\}_{t\geq 0}$ on $H$, and $W$ is a cylindrical Wiener process on  $U$.  We also assume that there exists a Hilbert space $V$ continuously embedded in $H$ and a constant $\gamma\in[0,1/2)$ such that the semigroup satisfies the smoothing estimate
\begin{equation*}
    \|S(t)\|_{\mathcal L(H;V)}\leq C_Tt^{-\gamma},\qquad 0<t\leq T.
\end{equation*} 
We further assume that $B\in\mathcal{L}_2(U;H)$, and that $F\colon H\rightarrow H$ satisfies
    \begin{align*} 
    \sup_{x\in H}\frac{\|F(x)\|}{1+\|x\|}<\infty, \quad\sup_{x\in H}\|\nabla F(x)\|_{\mathcal{L}(H)}<\infty,  
    \end{align*}
    where $\nabla F(x)\in\mathcal L(H)$ denotes the Gâteaux derivative of $F$ at $x$, and $\nabla F$ is continuous with respect to the strong operator topology. Moreover, $F$ is twice differentiable along directions in $V$, and there exists $C>0$ such that, for all $x\in H$ and $h_1,h_2\in V$,
    \begin{equation}\label{eq F second V}
    \|\nabla^2F(x)(h_1,h_2)\|_H\leq C\|h_1\|_V\|h_2\|_V .
    \end{equation}

Under these settings,  for any $x\in H$ and $T>0$, equation \eqref{nl equation} admits a unique mild solution $X\in L^2(\Omega;C([0,T];H))$; see e.g. \cite{DPZ-96}.

    \vspace{2mm}
    To address the relation between asymptotic strong Feller and weak observability, let us consider a non-autonomous  controlled equation
    \begin{equation}\label{control equation2}
    \begin{cases}
        dY=AYdt+\nabla F(\xi)Ydt+Bh(t)dt,\quad t\in(0,T),\\
        Y_0=y,
    \end{cases}
    \end{equation}
    where $\xi\in C([0,T];H)$ is a potential and  $h\in L^2(0,T;U)$ denotes a control term. The mild solution of equation \eqref{control equation2} is given by
    \begin{equation*}
        Y_t=\mathcal{U}_{\xi}(t,0)y+\int_0^t\mathcal{U}_{\xi}(t,s)Bh(s)ds,\quad 0\leq t\leq T.
    \end{equation*}
    Here $\{\mathcal{U}_{\xi}(t,s)\}_{0\leq s\leq t\leq T}\subset \mathcal{L}(H)$ is the two-parameter family satisfying
    \begin{equation}\label{eq UZ}
        \mathcal{U}_{\xi}(t,s)=S(t-s)+\int_s^tS(t-r)\nabla F(\xi_r)\mathcal{U}_{\xi}(r,s)dr,\quad 0\leq s\leq t\leq T.
    \end{equation}

\begin{theorem}\label{thm 3}
    Let $T>0$. Assume that there exist constants $\alpha\in(0,1)$ and $C>0$ such that for any $\xi\in C([0,T];H)$, the $\alpha$-weak observability inequality holds in time $T$:
    \begin{equation}\label{eq L obs}
        \|\mathcal{U}_{\xi}(T,0)^*z\|\leq C\|B^*\mathcal{U}_{\xi}(T,\cdot)^*z\|_{L^2(0,T;U)}+\alpha\|z\|\quad \forall\, z\in H.
    \end{equation}
    Then the Markov semigroup $\{P_t\}_{t\geq0}$ associated with equation \eqref{nl equation} is $\alpha'$-asymptotically regular in time $T$ for any $\alpha'\in(\alpha,1)$.
\end{theorem}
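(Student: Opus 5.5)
The plan is to repeat the Malliavin argument of the step $(\runum{3})\Rightarrow(\runum{1})$ in the proof of Theorem~\ref{thm 2}, now pathwise along the random potential $\xi=X^x$. Fix $x,y\in H$ and write $X=X^x$. The Jacobian $Z_t=\nabla X_t\cdot y$ solves $dZ=AZ\,dt+\nabla F(X)Z\,dt$, so $Z_t=\mathcal U_X(t,0)y$. For a control $v\in L^2(0,T;U)$, the Malliavin derivative $V_t=\langle\mathcal D X_t,v\rangle$ solves $dV=AV\,dt+\nabla F(X)V\,dt+Bv\,dt$ with $V_0=0$, so $V_T=L_Xv$ with $L_\xi h:=\int_0^T\mathcal U_\xi(T,r)Bh(r)\,dr$. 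Setting $R=Z-V$, we get
\[
\langle\nabla P_T\varphi(x),y\rangle=\mathbb E\langle\nabla\varphi(X_T),V_T\rangle+\mathbb E\langle\nabla\varphi(X_T),R_T\rangle .
\]
The second term is at most $\|\nabla\varphi\|_\infty\,\mathbb E\|R_T\|$. By Malliavin integration by parts, the first equals $\mathbb E[\varphi(X_T)\,\delta(v)]$, where $\delta$ is the Skorokhod integral. It therefore suffices to construct a random control $v$ with
\[
\|R_T\|\le\alpha'\|y\|\ \text{a.s.},\qquad \mathbb E|\delta(v)|\le C'\|y\| .
\]

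\textbf{Step 1 (deterministic regularized control).} For each $\xi\in C([0,T];H)$, the hypothesis \eqref{eq L obs} together with the $(\runum{2})\Rightarrow(\runum{3})$ step of Theorem~\ref{thm 2} (with $U(t,s)=\mathcal U_\xi(t,s)$, $G=0$) gives a control with residual at most $\alpha\|y\|$. That control comes from Hahn--Banach, however, and has no smooth dependence on $\xi$. I would instead use the Tikhonov (penalized least-squares) control
\[
v_\xi=-L_\xi^*(\beta I+L_\xi L_\xi^*)^{-1}\mathcal U_\xi(T,0)y,\qquad r_\xi:=\mathcal U_\xi(T,0)y+L_\xi v_\xi=\beta(\beta I+L_\xi L_\xi^*)^{-1}\mathcal U_\xi(T,0)y .
\]
The aim is to show, using only the weak observability inequality with constants $(C,\alpha)$ that are uniform in $\xi$, that a fixed $\beta=\beta(C,\alpha,\alpha')>0$ gives $\|r_\xi\|\le\alpha'\|y\|$ and $\|v_\xi\|\le C''\|y\|$ for all $\xi$. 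This is where the slack $\alpha'>\alpha$ is spent. The natural route is to compare $v_\xi$ with the minimizer of $\|h\|^2+\beta^{-1}\|\mathcal U_\xi(T,0)y+L_\xi h\|^2$, tested against the control produced in Theorem~\ref{thm 2}. That yields $\|r_\xi\|^2\le \beta C^2\|y\|^2+\alpha^2\|y\|^2$ and $\|v_\xi\|\le\sqrt{C^2+\alpha^2/\beta}\,\|y\|$. We then choose $\beta$ small enough that $\beta C^2+\alpha^2\le\alpha'^2$.

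\textbf{Step 2 (random control and residual).} Put $v:=v_{X}$. The map $\xi\mapsto v_\xi$ is a composition of the bounded operators $\mathcal U_\xi$, $L_\xi$, $L_\xi^*$ and the resolvent $(\beta I+L_\xi L_\xi^*)^{-1}$, whose norm is at most $\beta^{-1}$. Hence $v$ is Malliavin differentiable, and $\|R_T\|=\|r_X\|\le\alpha'\|y\|$ holds almost surely. Since $v$ is not adapted, we bound the Skorokhod integral by
\[
\mathbb E|\delta(v)|^2\le \mathbb E\|v\|^2_{L^2(0,T;U)}+\mathbb E\|\mathcal Dv\|^2_{L^2(0,T;\mathcal L_2(U;L^2(0,T;U)))} .
\]
The first term is at most $C''^2\|y\|^2$.

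\textbf{Step 3 (main obstacle: bounding $\mathcal Dv$).} Differentiating $v_X$ in $\omega$ produces derivatives of $\mathcal U_X$ in the direction $\mathcal D X$. By \eqref{eq UZ} these are of the form
\[
\int S(t-r)\,\nabla^2F(X_r)\big(\mathcal D X_r,\mathcal U_X(r,s)\,\cdot\big)\,dr .
\]
This is the only place where $\nabla^2F$ appears, and \eqref{eq F second V} controls it only for arguments in $V$. I would therefore use $\mathcal D_uX_r=\mathcal U_X(r,u)B$ for $u<r$ together with the smoothing bound $\|S(t)\|_{\mathcal L(H;V)}\le C_Tt^{-\gamma}$. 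Writing $\mathcal U_X(r,u)=S(r-u)+\int_u^r S(r-\rho)\nabla F\,\mathcal U_X\,d\rho$ and using $\sup\|\nabla F\|<\infty$ gives $\|\mathcal U_X(r,u)\|_{\mathcal L(H;V)}\lesssim (r-u)^{-\gamma}$, and the same bound for $\mathcal U_X(r,s)y$ in $V$. The resulting singular integrals converge because $2\gamma<1$, with $B\in\mathcal L_2(U;H)$ supplying the Hilbert--Schmidt norm. The resolvent derivative is handled by $D(M^{-1})=-M^{-1}(DM)M^{-1}$ with $\|M^{-1}\|\le\beta^{-1}$. All of these bounds are deterministic and uniform in $X$, since they use only $\sup\|\nabla F\|$ and \eqref{eq F second V}. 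This gives $\mathbb E\|\mathcal Dv\|^2\le C'''(\beta)\|y\|^2$.

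Combining the three steps and taking the supremum over $\|y\|\le1$ yields $|\nabla P_T\varphi(x)|\le C'\|\varphi\|_\infty+\alpha'\|\nabla\varphi\|_\infty$. I expect the $V$-regularity bookkeeping in Step~3 to be the main technical difficulty. The quantitative Tikhonov estimate in Step~1 also needs care to obtain a residual bound of exactly $\alpha'$ with $\beta$ independent of $\xi$.
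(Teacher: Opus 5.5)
Your proposal is correct and follows the paper's proof essentially step by step. Your Tikhonov control $v_\xi$ is exactly the paper's $h_\xi$ in \eqref{eq hz} with $\rho=\beta$. The decomposition $Z=V+R$, the generalized It\^o isometry for the Skorokhod integral, and the $V$-smoothing bounds on $\mathcal D_s\mathcal U_X$ and $\mathcal D_s\mathcal A_X$ match Steps 2--4. Your Step 1 reaches the same bounds $\|r_\xi\|^2\le(\beta C^2+\alpha^2)\|y\|^2$ and $\|v_\xi\|^2\le(C^2+\alpha^2/\beta)\|y\|^2$ more directly than the paper, which uses Fenchel--Rockafellar duality plus Young's inequality. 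You instead compare the minimizer with the Hahn--Banach control from Theorem~\ref{thm 2}.

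There is one small sign slip. With $R=Z-V$ and $V_T=L_Xv$, you must take $v:=-v_X$, just as the paper takes $v=-h_X$. With $v=v_X$ you get $R_T=\mathcal U_X(T,0)y-L_Xv_X\neq r_X$.
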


The weak observability condition \eqref{eq L obs} can be formulated equivalently in terms of approximate null controllability. Indeed, for each fixed $\xi\in C([0,T];H)$, equation \eqref{control equation2} is a non-autonomous linear control system whose evolution family is $\mathcal U_\xi(t,s)$. Thus, applying the equivalence between weak observability and $\alpha$-null controllability proved in Theorem~\ref{thm 2}, we obtain the following.
\begin{corollary}\label{cor nl control}
    Let $T>0$. Assume that there exist constants $\alpha\in(0,1)$ and $C>0$ such that for any $\xi\in C([0,T];H)$ and $y\in H$, there exists $h\in L^2(0,T;U)$ such that the solution $Y$ of equation \eqref{control equation2} satisfies
    \begin{equation*}
        \|Y(T)\|\leq\alpha\|y\|,\quad\|h\|_{L^2(0,T;U)}\leq C\|y\|.
    \end{equation*}
    Then the Markov semigroup $\{P_t\}_{t\geq0}$ associated with equation \eqref{nl equation} is $\alpha'$-asymptotically regular in time $T$ for any $\alpha'\in(\alpha,1)$.
\end{corollary}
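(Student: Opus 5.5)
The corollary follows by reduction to Theorem~\ref{thm 3}. Fix $\xi\in C([0,T];H)$ and consider \eqref{control equation2} as a non-autonomous linear control system with evolution family $\mathcal U_\xi(t,s)$ and control operator $B(t)\equiv B$. We first check that $\mathcal U_\xi$ satisfies the structural assumptions of Section~\ref{Sec 2} on $[0,T]$. These assumptions are strong continuity, the evolution property $\mathcal U_\xi(t,r)\mathcal U_\xi(r,s)=\mathcal U_\xi(t,s)$, and an exponential bound.

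\begin{itemize}
\item \emph{Existence, uniqueness and exponential bound.} Since $\sup_x\|\nabla F(x)\|_{\mathcal L(H)}<\infty$, we get existence and uniqueness of solutions to \eqref{eq UZ} by a Picard iteration. The bound $\|\mathcal U_\xi(t,s)\|\le M e^{c(t-s)}$ then follows from Gronwall's inequality.
\item \emph{Evolution property.} The identity $\mathcal U_\xi(t,r)\mathcal U_\xi(r,s)=\mathcal U_\xi(t,s)$ follows from uniqueness of solutions to the integral equation \eqref{eq UZ}.
\item \emph{Strong continuity.} This follows from the strong continuity of $S$ and the strong continuity of $\nabla F$ composed with the continuous path $\xi$.
\end{itemize}

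Only the equivalence $(\runum{2})\Leftrightarrow(\runum{3})$ of Theorem~\ref{thm 2} is needed here. Its proof is purely deterministic: it uses only the operator $L_{0,T}h=\int_0^T\mathcal U_\xi(T,r)Bh(r)\,dr$, its adjoint, and Hahn--Banach duality. The stochastic integrability condition with exponent $\beta$ therefore plays no role, although it also holds because $B\in\mathcal L_2(U;H)$.

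Next we apply the implication $(\runum{3})\Rightarrow(\runum{2})$ of Theorem~\ref{thm 2}, with $s=0$ and evolution family $\mathcal U_\xi$. The hypothesis of the corollary is exactly statement $(\runum{3})$ with constants $\alpha_3=\alpha$ and $C_3=C$. The duality argument in the proof of Theorem~\ref{thm 2} yields
\begin{equation*}
\|\mathcal U_\xi(T,0)^*z\|\le C\|B^*\mathcal U_\xi(T,\cdot)^*z\|_{L^2(0,T;U)}+\alpha\|z\|,\qquad z\in H.
\end{equation*}
This holds with the same constants $\alpha$ and $C$, which do not depend on $\xi$. The uniformity in $\xi$ required by \eqref{eq L obs} is therefore inherited directly from the uniformity assumed in the corollary.

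Finally, Theorem~\ref{thm 3} applies and gives $\alpha'$-asymptotic regularity of $\{P_t\}_{t\ge0}$ for every $\alpha'\in(\alpha,1)$.

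The main step to watch is the first one: confirming that $\mathcal U_\xi$ is a genuine strongly continuous evolution family, so that Theorem~\ref{thm 2} applies verbatim. Even if some continuity detail were delicate, the $(\runum{3})\Rightarrow(\runum{2})$ computation uses only boundedness of $\mathcal U_\xi(T,0)$ and of $L_{0,T}$, so the argument is robust.
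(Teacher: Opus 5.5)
Your proposal is correct and follows the paper's route. For each fixed $\xi$ you apply the purely deterministic implication $(\runum{3})\Rightarrow(\runum{2})$ of Theorem~\ref{thm 2} to the evolution family $\mathcal U_\xi$, observe that the constants $\alpha$ and $C$ carry over unchanged (so \eqref{eq L obs} holds uniformly in $\xi$), and then invoke Theorem~\ref{thm 3}; your check that $\mathcal U_\xi$ is a genuine strongly continuous evolution family is a detail the paper leaves implicit.
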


\begin{proof} [Proof of Theorem \ref{thm 3}]
The proof is divided into four steps. Step 1 constructs, by duality and weak observability, a control $h_\xi$ with bounded cost and terminal defect below $\alpha'\|y\|$. Step 2 applies this control to the stochastic path $X$ and decomposes the derivative flow into a Malliavin part and a remainder. Step 3 estimates the control cost through the generalized Itô isometry. Step 4 proves the required bounds on $\mathcal D_s\mathcal A_X$ and $\mathcal D_s\mathcal U_X$, which closes the gradient estimate.

\vspace{2mm}

\noindent {\it Step 1: Construction of the deterministic control.} Let us denote $\mathcal{U}_{\xi}=\mathcal{U}_{\xi}(T,0)$ and set
    \begin{align*}
        &\mathcal{A}_{\xi}\colon L^2(0,T;U)\rightarrow H,\quad \mathcal{A}_{\xi}h=\int_0^T\mathcal{U}_{\xi}(T,t)Bh(t)dt,\\
        &\mathcal{M}_{\xi}\colon H\rightarrow H,\quad \mathcal{M}_{\xi}z=\mathcal{A}_{\xi}\mathcal{A}_{\xi}^*z=\int_0^T\mathcal{U}_{\xi}(T,t)BB^*\mathcal{U}_{\xi}(T,t)^*zdt.
    \end{align*}

    Following the proof of \cite[Theorem 3.1]{LXZ-26}, define two  convex and continuous functions by 
    \begin{align*}
        \Lambda\colon  L^2(0,T;U)\rightarrow\R^+,&\quad \Lambda(h)=\frac{1}{2}\|h\|^2_{L^2(0,T;U)},\\
        \Phi\colon H\rightarrow\R^+,&\quad \Phi(z)=\frac{1}{2\rho}\|z+\mathcal{U}_{\xi}y\|^2.
    \end{align*}

    We then consider the following minimization problem
    \begin{equation}\label{eq problem2}
        \inf_{h\in L^2(0,T;U)}J(h)=\inf_{h\in L^2(0,T;U)}(\Lambda(h)+\Phi(\mathcal{A}_{\xi}h)).
    \end{equation}

    Note that $J$ is strictly convex; thus problem \eqref{eq problem2} admits at most one solution. The Fenchel conjugates $\Lambda^*\colon L^2(0,T;U)\rightarrow\R^+$ and $\Phi^*\colon H\rightarrow\R$ are given by
    \begin{align*}
        \Lambda^*(h)&=\sup_{h'\in L^2(0,T;U)}(\langle h,h'\rangle_{L^2(0,T;U)}-\Lambda(h'))=\frac{1}{2}\|h\|^2_{L^2(0,T;U)}=\Lambda(h),\\
        \Phi^*(z)&=\sup_{z'\in H}(\langle z,z'\rangle-\Phi(z'))=\frac{\rho}{2}\|z\|^2-\langle z,\mathcal{U}_{\xi}y\rangle.
    \end{align*}

    The corresponding dual problem is then given by
    \begin{equation}\label{eq problem2*}
        \sup_{z\in H}J^*(z)=\sup_{z\in H}(-\Phi^*(-z)-\Lambda^*(\mathcal{A}_{\xi}^*z))=-\inf_{z\in H}\hat{J}^*(z),
    \end{equation}
    where 
    \begin{equation*}
        \hat{J}^*\colon H\rightarrow\R,\quad \hat{J}^*(z):=\tfrac{1}{2}\langle(\mathcal{M}_{\xi}+\rho)z,z\rangle+\langle z,\mathcal{U}_{\xi}y\rangle.
    \end{equation*}

   Clearly, $\Phi$ and $\Lambda$ are finite and continuous. Thus, by the Fenchel--Rockafellar duality theorem \cite{ET-99}, both problems \eqref{eq problem2} and \eqref{eq problem2*} admit solutions. In particular, 
    \begin{equation*}
        \inf_{h\in L^2(0,T;U)}J(h)=\sup_{z\in H}J^*(z)=-\inf_{z\in H}\hat{J}^*(z).
    \end{equation*}

    The functional $\hat{J}^*$ is differentiable, and its unique minimizer $z^*\in H$ is given by
    \begin{equation*}
        (\mathcal{M}_{\xi}+\rho)z^*+\mathcal{U}_{\xi}y=0,\quad z^*=-(\mathcal{M}_{\xi}+\rho)^{-1}\mathcal{U}_{\xi}y.
    \end{equation*}
 
    Meanwhile, using the Fenchel dual relation, the unique minimizer $h^*$ of $J$ is given by
    \begin{equation}\label{eq h* def}
            h^*(t)=B^*\mathcal{U}_{\xi}(T,t)^*z^*,\quad t\in(0,T).
    \end{equation}     
    By definition, one has
    \begin{equation*}
        J(h^*)=-\hat{J}^*(z^*)=-\frac{1}{2}\langle z^*,\mathcal{U}_{\xi}y\rangle=-\frac{1}{2}\langle\mathcal{U}_{\xi}^*z^*,y\rangle.
    \end{equation*}
   Also, from our construction, it follows that
    \begin{equation}\label{eq h-beta1}
        \frac{1}{2}\|h^*\|^2_{L^2(0,T;U)}+\frac{1}{2\rho}\|\mathcal{U}_{\xi}y+\mathcal{A}_{\xi}h^*\|^2=J(h^*)=-\frac{1}{2}\langle \mathcal{U}_{\xi}^*z^*,y\rangle.
    \end{equation}   
    Applying the weak observability inequality, i.e. \eqref{eq L obs} yields that
    \begin{equation}\label{eq h-beta2}
    \begin{aligned}
        |\langle \mathcal{U}_{\xi}^*z^*,y\rangle|\leq \|\mathcal{U}_{\xi}^*z^*\| \|y\|\leq \left(C\|B^*\mathcal{U}_{\xi}(T,\cdot)^*z^*\|_{L^2(0,T;U)}+\alpha \|z^*\|\right)\|y\|.
    \end{aligned}          
    \end{equation}    
    In addition, in view of the relation between $h^*$ and $z^*$, one has
    \begin{equation}\label{eq h-beta3}
    \begin{aligned}        \mathcal{U}_{\xi}y+\mathcal{A}_{\xi}h^*&=\mathcal{U}_{\xi}y+\mathcal{A}_{\xi}B^*\mathcal{U}_{\xi}(T,\cdot)^*z^*=-(\mathcal{M}_{\xi}+\rho)z^*+\mathcal{M}_{\xi}z^*=-\rho z^*.
    \end{aligned}
    \end{equation} 

    Summarizing \eqref{eq h* def}-\eqref{eq h-beta3}, we obtain that
    \begin{equation*}
        \|h^*\|^2_{L^2(0,T;U)}+\frac{1}{\rho}\|\mathcal{U}_{\xi}y+\mathcal{A}_{\xi}h^*\|^2\leq \left(C\|h^*\|_{L^2(0,T;U)}+\frac{\alpha}{\rho}\|\mathcal{U}_{\xi}y+\mathcal{A}_{\xi}h^*\|\right)\|y\|.
    \end{equation*}
    Using Young's inequality, one has
     \begin{equation*}
        \|h^*\|^2_{L^2(0,T;U)}+\frac{1}{\rho}\|\mathcal{U}_{\xi}y+\mathcal{A}_{\xi}h^*\|^2\leq \left(C^2+\frac{\alpha^2}{\rho}\right)\|y\|^2,
    \end{equation*}
    which implies that
    \begin{equation*}
        \|\mathcal{U}_{\xi}y+\mathcal{A}_{\xi}h^*\|\leq\left(\alpha+C\sqrt{\rho}\right)\|y\|,\quad\|h^*\|_{L^2(0,T;U)}\leq\left(C+\frac{\alpha}{\sqrt{\rho}}\right)\|y\|.
    \end{equation*}

    Consequently, for any $\alpha'\in(\alpha,1)$, we choose $\rho>0$ such that $\alpha+C\sqrt{\rho}=\alpha'$. This choice is independent of $\xi\in C([0,T];H)$ and $y\in H$. Set $C_0=C+\alpha/\sqrt{\rho}$. In what follows, the control term $h$ in equation \eqref{control equation2} is taken to be $h=h^*$ and has the form
    \begin{equation}\label{eq hz}
        h(t):=h_{\xi}(t)=-B^*\mathcal{U}_{\xi}(T,t)^*(\mathcal{M}_{\xi}+\rho)^{-1}\mathcal{U}_{\xi}y,\quad t\in(0,T),\, y\in H,\; \xi\in C([0,T];H).
    \end{equation}

    \vspace{2mm}
    \noindent {\it Step 2: Application to the stochastic trajectory.} We now consider the linearization of equation \eqref{nl equation}, which is given by
    \begin{equation*}
    \begin{cases}
        dZ=AZdt+\nabla F(X)Zdt,\quad t>0,\\
        Z_0=y.
    \end{cases}
    \end{equation*}

   For $v\in L^2(\Omega;L^2(0,T;U))$, define $V_t:=\langle\mathcal D X_t,v\rangle_{L^2(0,T;U)}$. Then $V$ satisfies
    \begin{equation*}
    \begin{cases}
        dV=AVdt+\nabla F(X)Vdt+Bv(t)dt,\quad t\in(0,T),\\
        V_0=0.
    \end{cases}
    \end{equation*}

   Since $X\in C([0,T];H)$ almost surely, we thus take 
    \begin{equation*}
        v(t)=-h_{X}(t)=B^*\mathcal{U}_X(T,t)^*(\mathcal{M}_X+\rho)^{-1}\mathcal{U}_Xy,\quad t\in(0,T),
    \end{equation*}
    and set $R_t:=Z_t-V_t$. This implies
    \begin{equation*}
        \begin{cases}
        dR=ARdt+\nabla F(X)Rdt+Bh_{X}(t)dt,\quad t\in(0,T),\\
        R_0=y,
    \end{cases}
    \end{equation*}    
    whose mild solution satisfies that
    \begin{equation*}
        R_T=\mathcal{U}_Xy+\mathcal{A}_Xh_{X}.
    \end{equation*}

   We therefore compute  that
   \begin{equation}\label{eq nabla PT}
    \begin{aligned}
        |\langle \nabla P_T\varphi(x),y\rangle|&=\left|\E\langle\nabla\varphi(X_T),Z_T\rangle\right|\leq        |\E\langle\nabla\varphi(X_T),V_T\rangle|+|\E\langle\nabla\varphi(X_T),R_T\rangle|\\
        &=|\E\langle \mathcal{D}(\varphi(X_T)),h_{X}\rangle_{L^2(0,T;U)}|+|\E\langle\nabla\varphi(X_T),R_T\rangle|\\
        &=\left|\E\left(\varphi(X_T)\int_0^Th_{X}(t)dW_t\right)\right|+|\E\langle\nabla\varphi(X_T),R_T\rangle|\\
        &\leq\|\varphi\|_{\infty}\E\left|\int_0^Th_{X}(t)dW_t\right|+\|\nabla\varphi\|_{\infty}\E\|R_T\|.
    \end{aligned}
    \end{equation}     
    Using the construction of the control $h_{X}$, it follows that
    \begin{equation}\label{eq RT}
        \E\|R_T\|\leq \alpha'\|y\|.
    \end{equation}

    \vspace{2mm}
    
    \noindent {\it Step 3: Estimate of the control cost.}  It thus remains to bound the cost of the control. To this end, invoking the generalized Itô isometry, see e.g. \cite{Nualart-06}, one has
    \begin{equation}\label{eq hW}
    \begin{aligned}
        \E\left|\int_0^Th_{X}(t)dW_t\right|^2&=\E\|h_X\|_{L^2(0,T;U)}^2+\E\int_0^T\int_0^T\left\langle\mathcal{D}_sh_{X}(t),\mathcal{D}_th_{X}(s)\right\rangle_{\mathcal{L}_2(U;U)}dtds\\
        &\leq C_0^2\|y\|^2+ \E\int_0^T\int_0^T\|\mathcal{D}_sh_{X}(t)\|_{\mathcal{L}_2(U;U)}^2 dtds,
    \end{aligned}
    \end{equation}
    where $\mathcal{D}_sf$ denotes the evaluation of $\mathcal{D}f$ at time $s\in[0,T]$, i.e. $\mathcal{D}f=\{\mathcal{D}_sf\}_{0\leq s\leq T}$ for any $f\in L^2(\Omega;E)$ on a Hilbert space $E$ with $\mathcal{D}f\in L^2(\Omega;L^2(0,T;\mathcal{L}_2(U;E)))$.

    Additionally, using the expression of $h_X$ by \eqref{eq hz}, we derive 
    \begin{align*}
        \mathcal{D}_sh_{X}(\cdot)&=-\mathcal{D}_s\left(\mathcal{A}_X^*(\mathcal{M}_X+\rho)^{-1}\mathcal{U}_Xy\right)\\
        &=-\left(\mathcal{D}_s\mathcal{A}_X^*\right)(\mathcal{M}_X+\rho)^{-1}\mathcal{U}_Xy-\mathcal{A}_X^*\left(\mathcal{D}_s(\mathcal{M}_X+\rho)^{-1}\right)\mathcal{U}_Xy\\        
        &\quad-\mathcal{A}_X^*(\mathcal{M}_X+\rho)^{-1}\left(\mathcal{D}_s\mathcal{U}_X\right)y.
    \end{align*}

    Note that
    \begin{align*}        \mathcal{D}_s\mathcal{U}_X(T,t)^*&=\left(\mathcal{D}_s\mathcal{U}_X(T,t)\right)^*, \\       \mathcal{D}_s((\mathcal{M}_X+\rho)^{-1})&=-(\mathcal{M}_X+\rho)^{-1}((\mathcal{D}_s\mathcal{A}_X)\mathcal{A}_X^*+\mathcal{A}_X(\mathcal{D}_s\mathcal{A}_X^*))(\mathcal{M}_X+\rho)^{-1}.
    \end{align*}
    Moreover, it follows that
    \begin{align*}
            \|\mathcal{A}^*_X(\mathcal{M}_X+\rho)^{-1/2}\|_{\mathcal{L}(H;L^2(0,T;U))}&\leq1,\\
            \|(\mathcal{M}_X+\rho)^{-1/2}\mathcal{A}_X\|_{\mathcal{L}(L^2(0,T;U);H)}&\leq1,\\
            \|(\mathcal{M}_X+\rho)^{-1/2}\|_{\mathcal{L}(H)}&\leq\rho^{-1/2}.
        \end{align*}
    We also understand $\mathcal{D}_s\mathcal{A}_X^*$ in the following sense:
    for any $q\in H$, $u\in U$ and $v\in L^2(0,T;U)$,
    \begin{equation*}
        \left\langle(\mathcal{D}_s\mathcal{A}_X^*q)u,v\right\rangle_{L^2(0,T;U)}=\left\langle(\mathcal{D}_s\mathcal{A}_Xv)u,q\right\rangle_H.
    \end{equation*}
    With this convention,
    \begin{equation*}
        \|\mathcal{D}_s\mathcal{A}_X^*\|_{\mathcal{L}_2(H;\mathcal{L}_2(U;L^2(0,T;U)))}=\|\mathcal{D}_s\mathcal{A}_X\|_{\mathcal{L}_2(L^2(0,T;U);\mathcal{L}_2(U;H))}.
    \end{equation*}
    In particular, for any $q\in H$,
    \begin{equation*}
        \|\mathcal{D}_s\mathcal{A}_X^*q\|_{\mathcal{L}_2(U;L^2(0,T;U))}\leq\|\mathcal{D}_s\mathcal{A}_X\|_{\mathcal{L}_2(L^2(0,T;U);\mathcal{L}_2(U;H))}\|q\|.
    \end{equation*}

    Summarizing these relations, we derive that
    \begin{align*}
         \|\mathcal{D}_sh_{X}(\cdot)\|_{L^2(0,T;\mathcal{L}_2(U;U))}&=\|\mathcal{D}_sh_{X}(\cdot)\|_{\mathcal{L}_2(U;L^2(0,T;U))}\\
         &\leq \|\left(\mathcal{D}_s\mathcal{A}_X^*\right)(\mathcal{M}_X+\rho)^{-1}\mathcal{U}_Xy\|_{\mathcal{L}_2(U;L^2(0,T;U))}\\
         &\quad+\|\mathcal{A}_X^*\left(\mathcal{D}_s(\mathcal{M}_X+\rho)^{-1}\right)\mathcal{U}_Xy\|_{\mathcal{L}_2(U;L^2(0,T;U))}\\
        &\quad+\|\mathcal{A}_X^*(\mathcal{M}_X+\rho)^{-1}\left(\mathcal{D}_s\mathcal{U}_X\right)y\|_{\mathcal{L}_2(U;L^2(0,T;U))}\\
        &:=I_1+I_2+I_3.    
    \end{align*}

        In particular, one has
    \begin{align*}
        I_1&\leq\|\mathcal{D}_s\mathcal{A}_X^*(\mathcal{M}_X+\rho)^{-1}\mathcal{U}_Xy\|_{\mathcal{L}_2(U;L^2(0,T;U))}
        \leq\|\mathcal{D}_s\mathcal{A}_X\|_{\mathcal{L}_2(L^2(0,T;U);\mathcal{L}_2(U;H))}
        \|(\mathcal{M}_X+\rho)^{-1}\mathcal{U}_Xy\|\\
        &\leq\rho^{-1}\|\mathcal{D}_s\mathcal{A}_X\|_{\mathcal{L}_2(L^2(0,T;U);\mathcal{L}_2(U;H))}
        \|\mathcal{U}_X\|_{\mathcal{L}(H)}\|y\|.
    \end{align*}
    For the second term, 
    \begin{equation*}
        I_2\leq I_{21}+I_{22},
    \end{equation*}
    where
    \begin{align*}
        I_{21}&:=\|\mathcal{A}_X^*(\mathcal{M}_X+\rho)^{-1}(\mathcal{D}_s\mathcal{A}_X)\mathcal{A}_X^*(\mathcal{M}_X+\rho)^{-1}\mathcal{U}_Xy\|_{\mathcal{L}_2(U;L^2(0,T;U))}\\
        &\leq \|\mathcal{A}_X^*(\mathcal{M}_X+\rho)^{-1/2}\|_{\mathcal{L}(H;L^2(0,T;U))}^2\|(\mathcal{M}_X+\rho)^{-1/2}\|_{\mathcal{L}(H)}^2\|\mathcal{D}_s\mathcal{A}_X\|_{\mathcal{L}_2(L^2(0,T;U);\mathcal{L}_2(U;H))}\|\mathcal{U}_Xy\|\\
        &\leq\rho^{-1}\|\mathcal{D}_s\mathcal{A}_X\|_{\mathcal{L}_2(L^2(0,T;U);\mathcal{L}_2(U;H))}\|\mathcal{U}_X\|_{\mathcal{L}(H)}\|y\|,
    \end{align*}
    and 
          \begin{align*}
        I_{22}
        &:=\|\mathcal{A}_X^*(\mathcal{M}_X+\rho)^{-1}\mathcal{A}_X
        (\mathcal{D}_s\mathcal{A}_X^*)(\mathcal{M}_X+\rho)^{-1}\mathcal{U}_Xy
        \|_{\mathcal{L}_2(U;L^2(0,T;U))}\\
        &\leq\|\mathcal{A}_X^*(\mathcal{M}_X+\rho)^{-1/2}\|_{\mathcal{L}(H;L^2(0,T;U))}
        \|(\mathcal{M}_X+\rho)^{-1/2}\mathcal{A}_X\|_{\mathcal{L}(L^2(0,T;U);H)}\\
        &\quad\cdot\|\mathcal{D}_s\mathcal{A}_X^*(\mathcal{M}_X+\rho)^{-1}\mathcal{U}_Xy    \|_{\mathcal{L}_2(U;L^2(0,T;U))}\\
        &\leq\rho^{-1}\|\mathcal{D}_s\mathcal{A}_X\|_{\mathcal{L}_2(L^2(0,T;U);\mathcal{L}_2(U;H))}
        \|\mathcal{U}_X\|_{\mathcal{L}(H)}\|y\|.
    \end{align*}
    Finally, the last term satisfies that
    \begin{align*}
        I_3&\leq \|\mathcal{A}_X^*(\mathcal{M}_X+\rho)^{-1/2}\|_{\mathcal{L}(H;L^2(0,T;U))}\|(\mathcal{M}_X+\rho)^{-1/2}\|_{\mathcal{L}(H)}\|\mathcal{D}_s\mathcal{U}_X\|_{\mathcal{L}(H;\mathcal{L}_2(U;H))}\|y\|\\
        &\leq \rho^{-1/2}\|\mathcal{D}_s\mathcal{U}_X\|_{\mathcal{L}(H;\mathcal{L}_2(U;H))}\|y\|.
    \end{align*}

     Recall that $\mathcal{U}_X=\mathcal{U}_X(T,0)$ is given by \eqref{eq UZ}, and the nonlinear operator $F$ satisfies that $\|\nabla F\|_{\infty}<\infty$. This implies that for any $T>0$, there exists a constant $C_T>0$ such that
    \begin{equation*}
        \|\mathcal{U}_X\|_{\mathcal{L}(H)}\leq C_T\quad \text{ almost surely.}
    \end{equation*}
    
    Collecting these estimates, we obtain that
    \begin{equation}\label{eq DhX}
    \begin{aligned}
        \E\int_0^T\int_0^T\|\mathcal{D}_sh_{X}(t)\|_{\mathcal{L}_2(U;U)}^2 dtds&\leq 18\rho^{-2}C_T^2\|y\|^2\E \int_0^T\|\mathcal{D}_s\mathcal{A}_X\|_{\mathcal{L}_2(L^2(0,T;U);\mathcal{L}_2(U;H))}^2 ds\\
        &\quad +2\rho^{-1}\|y\|^2\E \int_0^T\|\mathcal{D}_s\mathcal{U}_X\|_{\mathcal{L}(H;\mathcal{L}_2(U;H))}^2 ds. 
    \end{aligned}
    \end{equation}
    
    \vspace{2mm}
    \noindent {\it Step 4: Estimates on $\mathcal D_s\mathcal A_X$ and $\mathcal D_s\mathcal U_X$.} For the second term, we compute that for any $0\leq s,t\leq T$ and  $z\in H$,
    \begin{align*}
        (\mathcal{D}_s\mathcal{U}_X(T,t))z&=\int_{s\lor t}^TS(T-r)\left(\nabla F(X_r)\mathcal{D}_s\mathcal{U}_X(r,t)z+\nabla^2 F(X_r)(\mathcal{D}_sX_r,\mathcal{U}_X(r,t)z)\right)dr.
        \end{align*}
    Invoking the assumptions on $F$ and $S$, for $\mathcal{D}_s\mathcal{U}_X$, we derive that
\begin{align*}
    \|(\mathcal{D}_s\mathcal{U}_X(T,t))z\|_{\mathcal{L}_2(U;H)}&\leq\int_{s\lor t}^T\|S(T-r)\nabla F(X_r)\mathcal{D}_s\mathcal{U}_X(r,t)z\|_{\mathcal{L}_2(U;H)}dr\\
    &\quad+\int_{s\lor t}^T\|S(T-r)\nabla^2F(X_r)(\mathcal{D}_sX_r,\mathcal{U}_X(r,t)z)\|_{\mathcal{L}_2(U;H)}dr .
\end{align*}
We first estimate the second term. Since
\begin{equation*}
    \mathcal{U}_X(r,t)z=S(r-t)z+\int_t^rS(r-l)\nabla F(X_l)\mathcal{U}_X(l,t)zdl,\quad r\geq t,
\end{equation*}
the smoothing estimate of $S$ and the boundedness of $\nabla F$ imply
\begin{equation}\label{eq UX V estimate}
    \|\mathcal{U}_X(r,t)z\|_V\leq C_T(r-t)^{-\gamma}\|z\|,\qquad 0\leq t<r\leq T.
\end{equation}
Moreover, for $r<s$, $\mathcal{D}_sX_r=0$; and, for $r\geq s$,
\begin{equation*}
    \mathcal{D}_sX_r=S(r-s)B+\int_s^rS(r-l)\nabla F(X_l)\mathcal{D}_sX_ldl.
\end{equation*}
Again by the smoothing estimate of $S$ and the boundedness of $\nabla F$, we obtain
\begin{equation}\label{eq DsX V estimate}
    \|\mathcal{D}_sX_r\|_{\mathcal L_2(U;V)}\leq C_T(r-s)^{-\gamma},\qquad 0\leq s<r\leq T.
\end{equation}
Indeed, this follows from
\begin{align*}
    \|\mathcal{D}_sX_r\|_{\mathcal L_2(U;V)}&\leq C_T(r-s)^{-\gamma}\|B\|_{\mathcal L_2(U;H)}+C_T\int_s^r(r-l)^{-\gamma}\|\mathcal{D}_sX_l\|_{\mathcal L_2(U;H)}dl,
\end{align*}
together with the standard $H$-estimate for $\mathcal D_sX_l$.

Hence, using \eqref{eq F second V}, \eqref{eq UX V estimate} and \eqref{eq DsX V estimate}, we have
\begin{align*}
    \int_{s\lor t}^T\|S(T-r)\nabla^2F(X_r)(\mathcal{D}_sX_r,\mathcal{U}_X(r,t)z)\|_{\mathcal{L}_2(U;H)}dr&\leq C_T\int_{s\lor t}^T
    \|\mathcal{D}_sX_r\|_{\mathcal L_2(U;V)}
    \|\mathcal{U}_X(r,t)z\|_Vdr\\
    &\leq C_T\|z\|\int_{s\lor t}^T(r-s)^{-\gamma}(r-t)^{-\gamma}dr\\
    &\leq C_T\|z\|,
\end{align*}
where the last inequality follows from $\gamma<1/2$. Therefore,
\begin{align*}
    \|(\mathcal{D}_s\mathcal{U}_X(T,t))z\|_{\mathcal{L}_2(U;H)}&\leq C_T\int_{s\lor t}^T
    \|(\mathcal{D}_s\mathcal{U}_X(r,t))z\|_{\mathcal{L}_2(U;H)}dr+C_T\|z\|.
\end{align*}
An application of Gronwall's lemma then yields that there exists a deterministic constant
$C_T=C(S,F,B,T)$ such that
\begin{equation}\label{eq Ux}
     \|\mathcal{D}_s\mathcal{U}_X(T,t)\|_{\mathcal{L}(H;\mathcal{L}_2(U;H))}\leq C_T,\qquad0\leq s,t\leq T.
\end{equation}
 
    For the term $\mathcal{D}_s\mathcal{A}_X$, one has, for $v\in L^2(0,T;U)$,
\begin{align*}
    (\mathcal{D}_s\mathcal{A}_X)v=\int_s^T\mathcal{U}_X(T,r)\nabla^2 F(X_r)\left(\mathcal{D}_sX_r,\int_0^r\mathcal{U}_X(r,\tau)Bv(\tau)d\tau\right)dr.
\end{align*}
We estimate this term in the integrated Hilbert--Schmidt norm. For $0\leq r\leq T$, define
\begin{equation*}
    \mathcal K_rv:=\int_0^r\mathcal{U}_X(r,\tau)Bv(\tau)d\tau,\qquad v\in L^2(0,T;U).
\end{equation*}
By the smoothing estimate of the linearized evolution and the admissibility of $B$, we have
\begin{equation*}
    \|\mathcal K_r\|_{\mathcal L_2(L^2(0,T;U);V)}^2=\int_0^r\|\mathcal{U}_X(r,\tau)B\|_{\mathcal L_2(U;V)}^2d\tau\leq C_T.
\end{equation*}
Moreover, as above, 
\begin{equation*}
    \|\mathcal D_sX_r\|_{\mathcal L_2(U;V)}\leq C_T(r-s)^{-\gamma},\qquad 0\leq s<r\leq T .
\end{equation*}
Hence, using the assumption \eqref{eq F second V}, the boundedness of $\mathcal U_X(T,r)$ on $H$, and the ideal property of Hilbert--Schmidt operators, we obtain
\begin{align*}
    \|\mathcal{D}_s\mathcal{A}_X\|_{\mathcal L_2(L^2(0,T;U);\mathcal L_2(U;H))}&\leq C_T\int_s^T\|\mathcal D_sX_r\|_{\mathcal L_2(U;V)}\|\mathcal K_r\|_{\mathcal L_2(L^2(0,T;U);V)}dr\leq C_T\int_s^T(r-s)^{-\gamma}dr .
\end{align*}
Therefore, since $\gamma<1/2$, 
\begin{align}\label{eq Ax}
    \mathbb E\int_0^T\|\mathcal{D}_s\mathcal{A}_X\|_{\mathcal L_2(L^2(0,T;U);\mathcal L_2(U;H))}^2ds\leq C_T\int_0^T\left(\int_s^T(r-s)^{-\gamma}dr\right)^2ds\leq C_T .
\end{align}

   \vspace{-1mm}
   
   Consequently, collecting \eqref{eq nabla PT}-\eqref{eq Ax}, we conclude that for any $\alpha'\in(\alpha,1)$, there exists a constant $C'=C'(\alpha',S,F,B,T)$ such that
    \begin{equation*}
        |\nabla P_T\varphi(x)|\leq C'\|\varphi\|_{\infty}+\alpha'\|\nabla\varphi\|_{\infty}.
    \end{equation*}
    for any $x\in H$ and any $\varphi\colon H\rightarrow \R$  with $\|\varphi\|_{\infty}$ and $\|\nabla\varphi\|_{\infty}$ finite. 
    
    This completes the proof of Theorem \ref{thm 3}.

    \end{proof}

\section{The Oseen equation with localized noise}\label{Sec Oseen}

In this section, we consider a stochastic Oseen equation driven by localized white noise. The key point is that the Oseen operator may contain unstable low modes, and hence the asymptotic regularization is not a direct consequence of deterministic exponential stability.

\vspace{2mm}

Let $D\subset\mathbb R^d$ be a bounded domain with smooth boundary, where $d=2$ or $3$. The stochastic Oseen equation with localized white noise is given by
\begin{equation}\label{oseen equation}
\begin{cases}
    \partial_tu-\nu\Delta u+(\overline U\cdot\nabla)u+(u\cdot\nabla)\overline U+\nabla p=\frac{dW}{dt}(t,x),
    \quad x\in D,\; t>0,\\
    \dvg u=0,\quad  u|_{\partial D}=0,\\
    u(0)=u_0.
\end{cases}
\end{equation}
Here $u(t,x)$ denotes the velocity field, $p(t,x)$ denotes the pressure, $\nu>0$ is the viscosity coefficient, and $\overline U=\overline{U}(x)$ is a prescribed stationary background flow satisfying 
\begin{equation*}
    \overline U\in L^\infty(D;\mathbb R^d),\quad \dvg\overline{U}=0\quad\text{in }\mathcal{D}'(D).
\end{equation*}

The localized random force $W(t,x)$ is given by
\begin{equation}\label{eq noise oseen}
    W(t,x)=\sum_{1\leq j\leq N}b_j\beta_j(t)\psi_j(x).
\end{equation} 
Here $N\in\N^+$, $b_j$ are nonzero real numbers, and $\beta_j(t)$ are independent standard Brownian motions. The functions $\psi_j$ are {\it supported in a nonempty smooth subdomain $\omega\subset D$}; see Figure~\ref{fig1}.

Specifically, let $\{\tilde{\psi}_j\}_{j\in\N^+}$ be an orthonormal basis of $L^2(\omega;\mathbb R^d)$ consisting of eigenfunctions of the vector-valued Neumann Laplacian on $\omega$, with eigenvalues $\mu_j$, i.e., 

\begin{figure}[htbp]
\noindent
\makebox[\textwidth][l]{
\begin{minipage}[t]{0.48\textwidth}

      \begin{equation*}
     \begin{cases}
         -\Delta_{\omega}\tilde{\psi}_j=\mu_j\tilde{\psi}_j,\quad &x\in \omega,\\
    \partial_\nu\tilde{\psi}_j=0, \quad  &x\in \partial\omega,
     \end{cases}    
    \end{equation*}
    \noindent and 
    \begin{equation*}
        0=\mu_1\leq \mu_2\leq \cdots,\quad \mu_j\rightarrow\infty.
    \end{equation*}
    
\end{minipage}
\hspace{-0.03\textwidth}
\begin{minipage}[t]{0.48\textwidth}  
 \vspace{-1mm}
  \centering
\tikzset{every picture/.style={line width=0.75pt}}      

\begin{tikzpicture}[x=0.75pt,y=0.75pt,yscale=-1,xscale=1] 
 
\draw  [color={rgb, 255:red, 0; green, 0; blue, 0 }  ,draw opacity=1 ][line width=0.75]  (53.76,10.63) .. controls (64.12,6.62) and (83.08,2.06) .. (93.25,2.56) .. controls (103.42,3.07) and (116.07,8.47) .. (123.16,13.15) .. controls (130.25,17.83) and (136.32,26.76) .. (140.35,33.03) .. controls (144.37,39.31) and (150.05,55.37) .. (142.47,66.42) .. controls (134.9,77.48) and (120.58,80.69) .. (114.42,83.1) .. controls (108.25,85.52) and (54.73,94.73) .. (42.43,95.46) .. controls (30.12,96.19) and (18.31,94.08) .. (12.24,89.76) .. controls (6.16,85.44) and (4.1,75.48) .. (2.9,66.07) .. controls (1.71,56.67) and (2.69,47.87) .. (9.08,38.32) .. controls (15.47,28.77) and (43.4,14.64) .. (53.76,10.63) -- cycle ;
\draw [line width=3] [line join = round][line cap = round]    ;
\draw  [color={rgb, 255:red, 239; green, 12; blue, 37 }  ,draw opacity=1 ][fill={rgb, 255:red, 239; green, 12; blue, 37 }  ,fill opacity=0.3 ] (61.54,37.85) .. controls (67.52,34.32) and (69.91,34.32) .. (75.3,34.82) .. controls (80.68,35.33) and (86.18,38.66) .. (90.05,43.03) .. controls (93.91,47.4) and (94.42,55.9) .. (83.27,62.86) .. controls (72.11,69.81) and (42.48,76.49) .. (42.14,62.97) .. controls (41.79,49.44) and (55.55,41.38) .. (61.54,37.85) -- cycle ;

\draw (120,81.4) node [anchor=north west][inner sep=0.75pt]    {$D$};
\draw (19,63.4) node [anchor=north west][inner sep=0.75pt]    {$\textcolor[rgb]{0.94,0.05,0.15}{\omega }$};

\end{tikzpicture}
\vspace{-1mm}
    \caption{Localized noise}\label{fig1}
\end{minipage}}
\end{figure}
\noindent The functions $\psi_j\in L^2(D;\mathbb R^d)$ are then defined as the zero extension of $\tilde{\psi}_j$  by 
\begin{equation*}
    \psi_j(x):=\begin{cases}
        \tilde{\psi}_j(x),\quad &x\in \omega,\\
        0,\quad &x\in D\setminus\omega.
    \end{cases}
\end{equation*}
\begin{remark}
The localized noise directions here are chosen as eigenfunctions of the Neumann Laplacian on the small region $\omega$. This choice is mainly for convenience and is not essential. One may also use other orthonormal bases of $L^2(\omega;\R^d)$, extended by zero outside $\omega$, to define the noise directions. For example, the Dirichlet eigenfunctions are used in {\rm \cite{LXZ-26}}.
\end{remark}

We now specify the functional setting. Let
\begin{equation}\label{eq ossen H}
    \mathcal H\left\{u\in L^2(D;\mathbb R^d):\dvg u=0,\quad u\cdot n|_{\partial D}=0\right\},
\end{equation}
and let $\Pi$ be the Leray projection from $L^2(D;\mathbb R^d)$ onto $ \mathcal H$. Applying $\Pi$ to \eqref{oseen equation}, the pressure term disappears and the equation can be rewritten as
\begin{equation}\label{abstract oseen equation}
\begin{cases}
    du-A_{\overline U}udt=\Pi dW(t),\quad t>0,\\
    u(0)=u_0,
\end{cases}
\end{equation}
where denotes the realization on $\mathcal H$ of the Oseen operator formally given by
\begin{equation*}
    A_{\overline U}u:=\Pi\left(\nu\Delta u-(\overline U\cdot\nabla)u-(u\cdot\nabla)\overline{U}\right),
\end{equation*}
with the first-order terms understood in the distributional sense.

Under these settings, for any $u_0\in\mathcal H$ and $T>0$, equation \eqref{oseen equation} admits a unique mild solution $u(\cdot)\in L^2(\Omega;C([0,T];\mathcal H))$.

\begin{theorem}\label{thm oseen}
For any $T>0$, there exists a constant $C_T>0$ with the following property. For any $\alpha\in(0,1)$, set
\begin{equation}\label{eq N star oseen}
    N_*(T,\alpha):=\min\left\{N\in\N^+:\mu_{N+1}\geq C_T\alpha^{-2}\right\}.
\end{equation}
If the integer $N$ in \eqref{eq noise oseen} satisfies $N\geq N_*$, then the Markov semigroup $\{P_t\}_{t\geq0}$ associated with equation \eqref{oseen equation} on $\mathcal H$ is $\alpha$-asymptotically regular in time $T$.
\end{theorem}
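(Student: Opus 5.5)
By Theorem~\ref{thm 1}, applied with $H=\mathcal H$, $U=\R^N$, $A=A_{\overline U}$ and $B e_j=b_j\Pi\psi_j$, it suffices to prove the $\alpha$-weak observability inequality in time $T$:
\begin{equation*}
\|S(T)^*z\|\leq C\|B^*S(T-\cdot)^*z\|_{L^2(0,T;\R^N)}+\alpha\|z\|,\qquad z\in\mathcal H .
\end{equation*}
Theorem~\ref{thm 1} then gives $\alpha$-asymptotic regularity with the same $\alpha$. Here $S(t)^*$ is the semigroup generated by the adjoint Oseen operator $A_{\overline U}^*$, namely
\begin{equation*}
A_{\overline U}^*\varphi=\Pi\bigl(\nu\Delta\varphi+(\overline U\cdot\nabla)\varphi-(\nabla\overline U)^{\top}\varphi\bigr),
\end{equation*}
understood distributionally. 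The observation is
\begin{equation*}
B^*\varphi=\bigl(b_j\langle\varphi,\psi_j\rangle_{L^2(\omega)}\bigr)_{j\le N},
\end{equation*}
since $\Pi$ is self-adjoint and $\varphi\in\mathcal H$. Set $\varphi(t)=S(T-t)^*z$, which is a solution of the backward adjoint Oseen system.

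The plan has three steps, combined as in Lebeau--Robbiano.

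\emph{Step 1: full observability on $\omega$.} Use the known observability inequality for the (adjoint) Oseen/linearized Stokes system, obtained from Carleman estimates (Fern\'andez-Cara--Guerrero--Imanuvilov--Puel type, valid for $\overline U\in L^\infty$):
\begin{equation*}
\|\varphi(0)\|_{\mathcal H}\leq K_T\|\varphi\|_{L^2(0,T;L^2(\omega))},
\end{equation*}
which gives $\|S(T)^*z\|\le K_T\|\mathbf 1_\omega\varphi\|_{L^2((0,T)\times\omega)}$.

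\emph{Step 2: low/high splitting.} Let $Q_N$ be the $L^2(\omega)$-orthogonal projection onto $\mathrm{span}\{\tilde\psi_1,\dots,\tilde\psi_N\}$. Then
\begin{equation*}
\|\mathbf 1_\omega\varphi\|\le\|Q_N\varphi\|+\|(I-Q_N)\varphi\|,\qquad \|Q_N\varphi\|\le\bigl(\min_j|b_j|\bigr)^{-1}|B^*\varphi| .
\end{equation*}
For the high part, the Neumann eigenfunctions give the spectral tail bound
\begin{equation*}
\|(I-Q_N)f\|_{L^2(\omega)}\le\mu_{N+1}^{-1/2}\|\nabla f\|_{L^2(\omega)},\qquad f\in H^1(\omega),
\end{equation*}
so it suffices to control $\|\nabla\varphi\|_{L^2((0,T)\times D)}$ by $\|z\|$.

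\emph{Step 3: energy estimate.} Test the adjoint equation with $\varphi$. Using $\dvg\overline U=0$, the transport term vanishes. For the zero-order term $(\nabla\overline U)^\top\varphi$, integrate by parts to write it as $\int \overline U\cdot(\nabla\varphi)\varphi$-type terms, bounded by $\|\overline U\|_\infty\|\nabla\varphi\|\|\varphi\|$. Young's inequality and Gronwall then give
\begin{equation*}
\nu\int_0^T\|\nabla\varphi\|^2\,dt\le C(\nu,\|\overline U\|_\infty,T)\|z\|^2 .
\end{equation*}

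Combining the three steps:
\begin{equation*}
\|S(T)^*z\|\le K_T\bigl(\min_j|b_j|\bigr)^{-1}\|B^*\varphi\|_{L^2}+K_T\,C^{1/2}\mu_{N+1}^{-1/2}\|z\| .
\end{equation*}
Defining $C_T:=K_T^2C$, the condition $\mu_{N+1}\ge C_T\alpha^{-2}$ makes the remainder coefficient at most $\alpha$. This yields \eqref{eq N star oseen}, with a constant independent of $\alpha$, as required.

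The main obstacle is Step 1 with only $\overline U\in L^\infty$ and merely divergence-free. The first-order term $(u\cdot\nabla)\overline U$ must be understood distributionally, and the Carleman argument must handle the pressure. I would invoke the Carleman estimate for Stokes with $L^\infty$ drift, writing the troublesome term in divergence form $\dvg(\overline U\otimes\varphi)$ and absorbing it via $H^{-1}$-weighted Carleman estimates (Imanuvilov--Puel). If that is unavailable, the fallback is to avoid exact observability. In that case I would use unique continuation plus a compactness–uniqueness argument to get $\|S(T)^*z\|\le K\|\mathbf 1_\omega\varphi\|_{L^2}+\tfrac{\alpha}{2}\|z\|$, accepting a less explicit $C_T$.
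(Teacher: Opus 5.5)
Your proposal is correct and follows essentially the same route as the paper. Both arguments take a full observability inequality for the adjoint Oseen system on $\omega$ as given. They then split $\mathbf 1_\omega\varphi$ into the first $N$ Neumann modes, controlled by $B^*\varphi$ through $\min_j|b_j|$, and a tail, controlled by $\mu_{N+1}^{-1/2}\|\nabla\varphi\|_{L^2(\omega)}$. The quantity $\int_0^T\|\nabla\varphi\|^2\,dt$ is bounded by the energy estimate, and Theorem~\ref{thm 1} finishes the proof. The only differences are cosmetic: the paper invokes Buffe--Takahashi for the observability step, and it uses the orthogonal (squared-norm) splitting rather than the triangle inequality.
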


To study the asymptotic strong Feller property of \eqref{oseen equation}, we shall verify the weak observability inequality required in Theorem~\ref{thm 1}. This reduces the problem to an observability estimate for the adjoint Oseen equation
\begin{equation}\label{oseen adjoint}
\begin{cases}
    -\partial_ty-\nu\Delta y-(\overline{U}\cdot\nabla)y-(\nabla y)^{\rm T}\overline{U}+\nabla q=0,
    \quad x\in D,\;t\in(0,T),\\
    \dvg y=0,\quad y|_{\partial D}=0,\\
    y(T)=y_T\in\mathcal H,
\end{cases}
\end{equation}
where $q$ is the adjoint pressure. Recall the following observability inequality for the Oseen system. Indeed, the quantitative estimate in the form of  $C_T=Ce^{C/T}$ is recently derived by Buffe--Takahashi\cite{BT-25}. In \cite[Corollary 1.5]{BT-25}, this result is formulated for
strong solutions; however, by standard arguments, this result can be readily extended to weak solutions.

\begin{lemma}\label{lemma oseen OI}
For any $T>0$,  there exists a constant $C_T>0$
such that, for any $y_T\in\mathcal H$, the solution $y$ of equation  \eqref{oseen adjoint} satisfies
\begin{equation*}
    \|y(0)\|_{\mathcal H}^2\leq C_T\int_0^T\int_{\omega}|y(t,x)|^2dxdt .
\end{equation*}
\end{lemma}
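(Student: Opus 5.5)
The plan is to take the quantitative Carleman-based observability estimate of Buffe--Takahashi \cite[Corollary 1.5]{BT-25} as given for strong solutions. We then extend it to all terminal data $y_T\in\mathcal H$ by density, using the continuity of the adjoint solution map in the energy topology. The argument has three steps.

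\textbf{Step 1: time reversal and strong solutions.} Set $\tilde y(t):=y(T-t)$. Then \eqref{oseen adjoint} becomes a forward Oseen-type parabolic system,
\begin{equation*}
\partial_t\tilde y-\nu\Delta\tilde y-(\overline U\cdot\nabla)\tilde y-(\nabla\tilde y)^{\rm T}\overline U+\nabla\tilde q=0,
\end{equation*}
with $\dvg\tilde y=0$, Dirichlet boundary condition and initial datum $y_T$. Its generator is the $\mathcal H$-adjoint $A_{\overline U}^*$ of the Oseen operator. This is a relatively bounded perturbation of the Stokes operator, since the first-order terms are controlled by $\|\overline U\|_{L^\infty}\|\nabla\cdot\|$. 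Hence it generates an analytic semigroup, and $D(A_{\overline U}^*)$ is dense in $\mathcal H$. For $y_T\in D(A_{\overline U}^*)$ the solution is a strong solution, so the hypotheses of \cite[Corollary 1.5]{BT-25} are met. We obtain $\|y(0)\|_{\mathcal H}^2\leq C_T\int_0^T\!\int_\omega|y|^2dxdt$ with $C_T=Ce^{C/T}$. I would double-check that their assumption on the background flow is compatible with $\overline U\in L^\infty$ and $\dvg\overline U=0$. If they require more regularity, I would restrict to that class, since only the existence of some $C_T$ is needed.

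\textbf{Step 2: energy estimate for weak solutions.} Test the reversed equation with $\tilde y$. The transport term vanishes because $\dvg\overline U=0$ and $\tilde y|_{\partial D}=0$. The other first-order term satisfies
\begin{equation*}
|\langle(\nabla\tilde y)^{\rm T}\overline U,\tilde y\rangle|\leq\|\overline U\|_{L^\infty}\|\nabla\tilde y\|\|\tilde y\|\leq\tfrac{\nu}{2}\|\nabla\tilde y\|^2+C\|\tilde y\|^2 .
\end{equation*}
Absorbing the gradient term into the viscous dissipation and applying Gronwall's lemma gives
\begin{equation*}
\sup_{t\in[0,T]}\|y(t)\|_{\mathcal H}^2+\nu\int_0^T\|\nabla y\|^2dt\leq C(T,\nu,\overline U)\|y_T\|_{\mathcal H}^2 .
\end{equation*}
This yields existence and uniqueness of weak solutions, and shows that the linear map $y_T\mapsto y$ is bounded from $\mathcal H$ into $C([0,T];\mathcal H)\cap L^2(0,T;H^1_0)$.

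\textbf{Step 3: density.} Given $y_T\in\mathcal H$, choose $y_T^n\in D(A_{\overline U}^*)$ with $y_T^n\to y_T$ in $\mathcal H$. By Step 2 applied to $y^n-y$, we get $y^n(0)\to y(0)$ in $\mathcal H$ and $\mathbf 1_\omega y^n\to\mathbf 1_\omega y$ in $L^2(0,T;L^2(\omega))$. Passing to the limit in the inequality of Step 1, which has a constant independent of $n$, gives the lemma.

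The main obstacle I expect is Step 1: matching the precise functional framework of \cite{BT-25} (regularity of $\overline U$, and their notion of strong solution) with the weak setting here, where the first-order terms are only distributional. If needed, I would also approximate $\overline U$ by smooth divergence-free fields. This is only useful if the constant in \cite{BT-25} depends on $\overline U$ solely through $\|\overline U\|_{L^\infty}$, which I expect from the Carleman structure. Stability of the solution map under this perturbation of $\overline U$ would follow from the same energy estimate.
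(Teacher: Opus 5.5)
Your proposal is correct and takes essentially the same route as the paper. The paper simply invokes \cite[Corollary 1.5]{BT-25} for strong solutions and says the extension to weak solutions follows by standard arguments; your Steps 2--3 make those arguments explicit, namely the energy estimate for the time-reversed adjoint Oseen system (the same one the paper uses in Proposition~\ref{prop oseen truncated OI}) followed by density of $D(A_{\overline U}^*)$ in $\mathcal H$.
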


The following proposition is a truncated version of the observability inequality.

\begin{proposition}\label{prop oseen truncated OI}
For any $T>0$, there exists a constant $C_T>0$ such that, for any $N\in\N^+$ and $y_T\in\mathcal H$, the solution $y$ of equation \eqref{oseen adjoint} satisfies
\begin{equation*}
    \|y(0)\|_{\mathcal H}^2\leq C_T\left(\|\mathsf P_N(\mathbf 1_{\omega}y)\|_{L^2(0,T;L^2(\omega))}^2+\mu_{N+1}^{-1}\|y_T\|_{\mathcal H}^2\right),
\end{equation*}
where $\mathsf P_N$ denotes the orthogonal projection from $L^2(\omega;\mathbb R^d)$ onto $\operatorname{span}\{\psi_j:1\leq j\leq N\}$.
\end{proposition}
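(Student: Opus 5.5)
Starting from Lemma~\ref{lemma oseen OI}, split $\mathbf 1_\omega y = \mathsf P_N(\mathbf 1_\omega y) + (I-\mathsf P_N)(\mathbf 1_\omega y)$ in $L^2(\omega;\mathbb R^d)$. Orthogonality gives
\begin{equation*}
\|y(0)\|_{\mathcal H}^2\leq C_T\|\mathsf P_N(\mathbf 1_\omega y)\|_{L^2(0,T;L^2(\omega))}^2+C_T\|(I-\mathsf P_N)(\mathbf 1_\omega y)\|_{L^2(0,T;L^2(\omega))}^2 .
\end{equation*}
The task is then to bound the high-frequency tail by $C_T\mu_{N+1}^{-1}\|y_T\|_{\mathcal H}^2$. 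The tool is the Neumann spectral decay on $\omega$: for $f\in H^1(\omega;\mathbb R^d)$, expanding $f=\sum_j\langle f,\tilde\psi_j\rangle\tilde\psi_j$ and using $\|\nabla f\|_{L^2(\omega)}^2=\sum_j\mu_j|\langle f,\tilde\psi_j\rangle|^2$ (the Neumann quadratic form is the full $H^1$ seminorm), we get
\begin{equation*}
\|(I-\mathsf P_N)f\|_{L^2(\omega)}^2=\sum_{j>N}|\langle f,\tilde\psi_j\rangle|^2\leq\mu_{N+1}^{-1}\|\nabla f\|_{L^2(\omega)}^2 .
\end{equation*}
The Neumann basis is the convenient choice here, since no boundary condition on $\partial\omega$ is needed for $f=y(t)|_\omega$. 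Applying this pointwise in $t$ reduces the tail to $\mu_{N+1}^{-1}\int_0^T\|\nabla y(t)\|_{L^2(D)}^2dt$.

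It remains to prove the energy estimate $\int_0^T\|\nabla y\|_{L^2(D)}^2dt\leq C_T\|y_T\|_{\mathcal H}^2$ for the adjoint Oseen system \eqref{oseen adjoint}.
\begin{itemize}
\item Test the equation with $y$ (formally for smooth data, then by density for weak solutions), writing $\tau=T-t$.
\item The pressure term vanishes because $\dvg y=0$ and $y|_{\partial D}=0$.
\item The transport term $\int((\overline U\cdot\nabla)y)\cdot y=0$ because $\dvg\overline U=0$ and $y|_{\partial D}=0$.
\item The remaining term satisfies $|\int((\nabla y)^{\rm T}\overline U)\cdot y|\leq\|\overline U\|_{L^\infty}\|\nabla y\|\|y\|\leq\frac{\nu}{2}\|\nabla y\|^2+C\|y\|^2$.
\end{itemize}
This gives
\begin{equation*}
-\frac{d}{dt}\|y\|^2+\nu\|\nabla y\|^2\leq C\|y\|^2 .
\end{equation*}
Gronwall's lemma backward from $T$ then yields $\sup_t\|y(t)\|^2\leq e^{CT}\|y_T\|^2$ and $\nu\int_0^T\|\nabla y\|^2\leq(1+CTe^{CT})\|y_T\|^2$.

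Combining the three displays proves the proposition, after enlarging $C_T$ and keeping it independent of $N$.

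The main technical point is justifying the energy identity for weak solutions with only $\overline U\in L^\infty$, where the first-order terms are defined distributionally. I would handle this with the Galerkin (Stokes eigenfunction) approximation, or by regularizing $\overline U$ and passing to the limit using the uniform bounds. This is also where the extension of Lemma~\ref{lemma oseen OI} to weak solutions is used, by the same density argument.
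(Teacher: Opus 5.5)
Your proposal is correct and follows the paper's proof essentially step by step. You split the observation term from Lemma~\ref{lemma oseen OI} via the orthogonal projection $\mathsf P_N$, control the tail by $\mu_{N+1}^{-1}\|\nabla y\|_{L^2(0,T;L^2(\omega))}^2$ using the Neumann spectral representation, and close with the energy estimate for the adjoint Oseen system; the paper does this last step after the time reversal $z(t)=y(T-t)$, which is equivalent to your backward Gronwall argument, and your remarks on justifying the energy identity for weak solutions with $\overline U\in L^\infty$ make explicit what the paper leaves implicit.
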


\begin{proof}
Throughout the proof, the constant $C_T$ is independent of $N$ and $y_T$. By Lemma~\ref{lemma oseen OI}, 
\begin{equation*}
    \|y(0)\|_{\mathcal H}^2\leq C_T\|y\|_{L^2(0,T;L^2(\omega))}^2.
\end{equation*}
Since $\mathsf P_N$ is the orthogonal projection in $L^2(\omega;\mathbb R^d)$, it follows that
\begin{align*}
    \|y(0)\|_{\mathcal H}^2&\leq C_T\left(\|\mathsf P_N(\mathbf 1_{\omega}y)\|_{L^2(0,T;L^2(\omega))}^2+\|(I-\mathsf P_N)(\mathbf 1_{\omega}y)\|_{L^2(0,T;L^2(\omega))}^2
    \right)\\
    &\leq C_T\left(\|\mathsf P_N(\mathbf 1_{\omega}y)\|_{L^2(0,T;L^2(\omega))}^2+\mu_{N+1}^{-1}\|y\|_{L^2(0,T;H^1(D))}^2\right).
\end{align*}
Note that 
\begin{equation*}
    \|(I-\mathsf P_N)(\mathbf 1_{\omega}y)(t)\|_{L^2(\omega)}^2=\sum_{j>N}\langle y(t),\psi_j\rangle_{L^2(\omega)}^2.
\end{equation*}
On the other hand, by the spectral representation of the Neumann Laplacian
on $\omega$,
\begin{equation*}
    \|\nabla y(t)\|_{L^2(\omega)}^2=\sum_{j\in\N^+}\mu_j\langle y(t),\psi_j\rangle_{L^2(\omega)}^2.
\end{equation*}
Hence
\begin{align*}
    \|(I-\mathsf P_N)(\mathbf 1_{\omega}y)(t)\|_{L^2(\omega)}^2\leq\mu_{N+1}^{-1}\|\nabla y(t)\|_{L^2(\omega)}^2\leq\mu_{N+1}^{-1}\|y(t)\|_{H^1(D)}^2.
\end{align*}
Integrating this estimate over $(0,T)$ gives
\begin{equation*}
    \|(I-\mathsf P_N)(\mathbf 1_{\omega}y)\|_{L^2(0,T;L^2(\omega))}^2\leq\mu_{N+1}^{-1}\|y\|_{L^2(0,T;H^1(D))}^2.
\end{equation*}

It remains to estimate the last integral. Let $z(t)=y(T-t)$. Then $z(0)=y_T$ and $z$ solves the forward adjoint Oseen equation
\begin{equation*}
    \partial_tz-\nu\Delta z-(\overline{U}\cdot\nabla)z-(\nabla z)^{\rm T}\overline{U}+\nabla \widetilde q=0,\qquad \dvg z=0,\qquad z|_{\partial D}=0 .
\end{equation*}
Testing this equation by $z$ in $L^2(D;\mathbb R^d)$ yields
\begin{equation*}
    \frac{1}{2}\frac{d}{dt}\|z(t)\|_{\mathcal H}^2+\nu\|\nabla z(t)\|_{L^2(D)}^2=\int_D((\nabla z(t))^{\rm T}\overline{U})\cdot z(t)dx,
\end{equation*}
which implies 
\begin{equation*}
    \frac{d}{dt}\|z(t)\|_{\mathcal H}^2+\nu\|\nabla z(t)\|_{L^2(D)}^2\leq \nu^{-1}\|\overline{U}\|_{L^\infty(D)}^2\|z(t)\|_{\mathcal H}^2.
\end{equation*}
Thus for any $t\geq 0$
\begin{equation*}
    \|z(t)\|_{\mathcal H}^2\leq\exp\left(\nu^{-1}\|\overline{U}\|_{L^\infty(D)}^2t\right)\|y_T\|_{\mathcal H}^2,
\end{equation*}
and
\begin{align*}
    \int_0^T\|\nabla z(t)\|_{L^2(D)}^2dt&\leq\nu^{-1}\left(1+\nu^{-1}\|\overline{U}\|_{L^\infty(D)}^2T \exp\left(\nu^{-1}\|\overline{U}\|_{L^\infty(D)}^2T\right)\right)\|y_T\|_{\mathcal H}^2\leq C_T\|y_T\|_{\mathcal H}^2.
\end{align*}

Combining these estimates, we obtain
\begin{align*}
    \|y(0)\|_{\mathcal H}^2&\leq C_T\left(\|\mathsf P_N(\mathbf 1_{\omega}y)\|_{L^2(0,T;L^2(\omega))}^2+\mu_{N+1}^{-1}\|y_T\|_{\mathcal H}^2\right).
\end{align*}
\end{proof}

Invoking Proposition \ref{prop oseen truncated OI} and Theorem \ref{thm 1}, we now prove Theorem~\ref{thm oseen}.

\begin{proof}[Proof of Theorem~\ref{thm oseen}]
To align with the abstract setting in Section~\ref{Sec 1}, recall that the state space $\mathcal H$ is given by \eqref{eq ossen H}. By \eqref{abstract oseen equation}, the equation can be written as
\begin{equation*}
    du=Audt+BdW_t^N,\quad u(0)=u_0\in \mathcal H,
\end{equation*}
where
\begin{align*}
    A=A_{\overline{U}}\text{ with the no-slip boundary condition},&\quad U=\mathbb R^N,\quad
    W_t^N=(\beta_1(t),\cdots,\beta_N(t)),\\
    B\colon\mathbb R^N\rightarrow \mathcal H,\quad B\xi=\sum_{1\leq j\leq N}b_j\xi_j\Pi\psi_j,&\quad\xi=(\xi_1,\cdots,\xi_N)\in\mathbb R^N .
\end{align*}
Then $B\in\mathcal L(\mathbb R^N;\mathcal H)$. Since $U$ is finite dimensional, the noise is $\mathcal H$-valued and the Hilbert--Schmidt admissibility condition in \eqref{abs equation} is automatically satisfied. Thus equation \eqref{oseen equation} fits into the abstract setting of \eqref{abs equation}. For any $z\in\mathcal H$, by the definition of $B$, one has
\begin{equation}\label{eq B star oseen}
    B^*z=\left(b_1\langle z,\Pi\psi_1\rangle_{\mathcal H},\cdots,b_N\langle z,\Pi\psi_N\rangle_{\mathcal H}\right),\quad \|B^*z\|_{\mathbb R^N}^2=\sum_{1\leq j\leq N}b_j^2\langle z,\psi_j\rangle_{L^2(\omega)}^2.
\end{equation}

Let $y$ be the solution of equation \eqref{oseen adjoint}. Applying Proposition~\ref{prop oseen truncated OI} with $N\geq N_*(T,\alpha)$, one has
\begin{align*}
    \|y(0)\|_{\mathcal H}^2&\leq C_T\left(\|\mathsf P_N(\mathbf 1_{\omega}y)\|_{L^2(0,T;L^2(\omega))}^2+\mu_{N+1}^{-1}\|y_T\|_{\mathcal H}^2\right)\\
    &\leq C_T\|\mathsf P_N(\mathbf 1_{\omega}y)\|_{L^2(0,T;L^2(\omega))}^2+\alpha^2\|y_T\|_{\mathcal H}^2,
\end{align*}
where the last inequality follows from \eqref{eq N star oseen}.

Set $\hat b_N:=\min_{1\leq j\leq N}|b_j|$. By the assumption on $\{b_j\}_{1\leq j\leq N}$, we have $\hat b_N>0$. Since $\mathsf P_N$ is the orthogonal projection from $L^2(\omega;\mathbb R^d)$ onto $\operatorname{span}\{\psi_j:1\leq j\leq N\}$, one has
\begin{align*}
    \|\mathsf P_N(\mathbf 1_{\omega}y)\|_{L^2(0,T;L^2(\omega))}^2&=\int_0^T\sum_{1\leq j\leq N}\langle y(t),\psi_j\rangle_{L^2(\omega)}^2dt\leq\hat b_N^{-2}\int_0^T\sum_{1\leq j\leq N}
    b_j^2\langle y(t),\psi_j\rangle_{L^2(\omega)}^2dt.
\end{align*}
Using \eqref{eq B star oseen} with $z=y(t)$, we infer that
\begin{align*}
    \|y(0)\|_{\mathcal H}^2&\leq C_T\hat b_N^{-2}\int_0^T\|B^*y(t)\|_{\mathbb R^N}^2dt+\alpha^2\|y_T\|_{\mathcal H}^2=C_T\hat b_N^{-2}\|B^*y\|_{L^2(0,T;\mathbb R^N)}^2+\alpha^2\|y_T\|_{\mathcal H}^2.
\end{align*}
Since $y(t)=S(T-t)^*y_T$, this is exactly the weak observability inequality required in Theorem~\ref{thm 1}. This completes the proof.
\end{proof}

\section{Non-autonomous linear equation with localized noise}\label{Sec 5}
 
In this section, we apply the non-autonomous linear criterion from Section~\ref{Sec 2} to uniformly parabolic equations with localized finite-dimensional noise, reducing asymptotic regularization to a quantitative finite-mode observability estimate.

\vspace{2mm}

Let us consider a non-autonomous linear parabolic equation driven by localized white noise, 
\begin{equation}\label{para equation}
\begin{cases}
    \partial_tu-\mathcal L(t)u=\frac{dW}{dt}(t,x),\quad x\in D,\; t>s\geq0,\\
    u|_{\partial D}=0,\\
    u(s)=u_s.
\end{cases}
\end{equation}
Here $D\subset\mathbb R^d$ is a bounded domain with smooth boundary, $d\in\N^+$, $u_s\in L^2(D)$, and $\mathcal L(t)$ is a non-autonomous uniformly parabolic operator of the form
\begin{equation*}
    \mathcal L(t)u=\sum_{i,j=1}^d \partial_{x_i}\left(a_{ij}(t,x)\partial_{x_j}u\right)+\sum_{j=1}^d a_j(t,x)\partial_{x_j}u+a_0(t,x)u.
\end{equation*}
We assume that the coefficients  satisfy that, for any $T>0$,
\begin{equation*}
    a_{ij}\in C^{1,2}([0,T]\times \overline{D}),\quad a_{ij}=a_{ji},\quad a_j\in C^{0,1}([0,T]\times \overline{D}),\quad a_0\in L^\infty((0,T)\times D).
\end{equation*}
Moreover, the principal part is uniformly elliptic: there exists $\theta>0$ such that
\begin{equation*}
    \sum_{i,j=1}^d a_{ij}(t,x)\xi_i\xi_j\geq\theta|\xi|^2,\qquad(t,x,\xi)\in [0,\infty)\times D\times\mathbb R^d .
\end{equation*}

The localized random force $W(t,x)$ is given by
\begin{equation}\label{eq noise para}
    W(t,x)=\sum_{1\leq j\leq N}b_j\beta_j(t)\psi_j(x).
\end{equation}
Here $N\in\N^+$, $b_j$ are nonzero real numbers, and $\beta_j(t)$ are independent standard Brownian motions. The functions $\psi_j$ are supported in a nonempty smooth subdomain $\omega\subset D$. More precisely, as in Section~\ref{Sec Oseen}, let $\{\tilde{\psi}_j\}_{j\in\N^+}$ be an orthonormal basis of $L^2(\omega)$ consisting of eigenfunctions of the Neumann Laplacian on $\omega$, with eigenvalues $\mu_j$, $\mu_j\rightarrow\infty$. Then set each $\psi_j\in L^2(D)$ to be the zero extension of $\tilde{\psi}_j$.

Under these settings, for any $u_s\in L^2(D)$ and $T>s\geq0$, equation \eqref{para equation} admits a unique mild solution $u(\cdot)\in L^2(\Omega;C([s,T];L^2(D)))$.

\begin{theorem}\label{thm 5}
For any $T>s\geq 0$, there exists a constant $C_{s,T}>0$ with the following property. For any $\alpha\in(0,1)$, set
\begin{equation}\label{eq N star para}
    N_*(s,T,\alpha):=\min\left\{N\in\N^+:\mu_{N+1}\geq C_{s,T}\alpha^{-2}\right\}.
\end{equation}
If the integer $N$ in \eqref{eq noise para} satisfies $N\geq N_*$, then the Markov family $\{P_{s,t}\}_{t\geq s\geq0}$ associated with equation \eqref{para equation} on $L^2(D)$ is $\alpha$-asymptotically regular from time $s$ to time $T$.
\end{theorem}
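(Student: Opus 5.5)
The plan is to mirror the proof of Theorem~\ref{thm oseen}, but now invoking the non-autonomous equivalence Theorem~\ref{thm 2}, implication $(\runum{2})\Rightarrow(\runum{1})$, which gives $\alpha_1=\alpha_2$. First we cast \eqref{para equation} in the form \eqref{abs equation2}. We take $H=L^2(D)$, $U=\mathbb R^N$, $G=0$, $A(t)=\mathcal L(t)$ with Dirichlet boundary conditions, and $B\xi=\sum_{j\le N}b_j\xi_j\psi_j$. By classical parabolic theory (Tanabe/Acquistapace--Terreni), the regularity of the coefficients ensures that $\mathcal L(t)$ generates a strongly continuous evolution family $U(t,s)$ with exponential bounds. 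Since $U$ is finite dimensional and $B$ is bounded, the Hilbert--Schmidt condition with $\beta\in(0,1/2)$ is immediate.

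For $z\in L^2(D)$, the function $y(t)=U(T,t)^*z$ solves the backward adjoint problem
\begin{equation*}
-\partial_t y-\mathcal L(t)^*y=0,\qquad y|_{\partial D}=0,\qquad y(T)=z,
\end{equation*}
where $\mathcal L(t)^*y=\sum\partial_{x_i}(a_{ij}\partial_{x_j}y)-\sum\partial_{x_j}(a_jy)+a_0y$. Moreover,
\begin{equation*}
\|B^*y(t)\|^2=\sum_{j\le N}b_j^2\langle y(t),\psi_j\rangle_{L^2(\omega)}^2.
\end{equation*}

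The key input is the full observability inequality
\begin{equation*}
\|y(s)\|^2\le C_{s,T}\int_s^T\!\!\int_\omega|y|^2\,dx\,dt,
\end{equation*}
which I would cite from the Carleman estimate literature for non-autonomous parabolic equations with $C^{1,2}$ principal coefficients and first-order terms (Fursikov--Imanuvilov, Fernández-Cara--Zuazua). To handle $a_j\in C^{0,1}$ in divergence form, I would note that $\partial_{x_j}(a_jy)=a_j\partial_{x_j}y+(\partial_{x_j}a_j)y$ with bounded coefficients. Passing from strong to weak solutions is done by density. This step, checking that the cited Carleman result covers exactly these regularity assumptions, is the main point to verify.

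Next comes the truncation, exactly as in Proposition~\ref{prop oseen truncated OI}. We split $\mathbf 1_\omega y=\mathsf P_N(\mathbf 1_\omega y)+(I-\mathsf P_N)(\mathbf 1_\omega y)$. Using the Neumann spectral representation on $\omega$,
\begin{equation*}
\|(I-\mathsf P_N)(\mathbf 1_\omega y)(t)\|^2_{L^2(\omega)}\le\mu_{N+1}^{-1}\|\nabla y(t)\|^2_{L^2(D)}.
\end{equation*}
We then need the energy estimate
\begin{equation*}
\int_s^T\|\nabla y\|_{L^2(D)}^2\,dt\le C_{s,T}\|z\|^2.
\end{equation*}
It follows by testing the reversed-time equation with $y$: uniform ellipticity gives $\theta\|\nabla y\|^2$, and the lower-order terms are absorbed via Young's inequality using the $L^\infty$ bounds on $a_j$, $\partial_x a_j$, $a_0$ on $[0,T]\times D$, followed by Gronwall.

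Combining these estimates gives
\begin{equation*}
\|U(T,s)^*z\|^2\le C_{s,T}\Big(\hat b_N^{-2}\|B^*U(T,\cdot)^*z\|^2_{L^2(s,T;\mathbb R^N)}+\mu_{N+1}^{-1}\|z\|^2\Big),
\end{equation*}
where $\hat b_N=\min_{j\le N}|b_j|>0$. For $N\ge N_*(s,T,\alpha)$ we have $C_{s,T}\mu_{N+1}^{-1}\le\alpha^2$, and taking square roots via $\sqrt{a+b}\le\sqrt a+\sqrt b$ yields statement $(\runum{2})$ of Theorem~\ref{thm 2} with $\alpha_2=\alpha$. Theorem~\ref{thm 2} then delivers $\alpha$-asymptotic regularity of $P_{s,T}$. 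The constant $C_{s,T}$ in \eqref{eq N star para} is the product of the observability and energy constants, and is independent of $N$ and $\alpha$.
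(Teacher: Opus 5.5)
Your proposal is correct and follows the paper's proof essentially step by step. The steps match: the same abstract reformulation with $H=L^2(D)$, $U=\mathbb R^N$, $G=0$; the Carleman-based full observability inequality (the paper cites Fursikov--Imanuvilov); the truncation through the Neumann spectral bound $\sum_{j>N}\langle y,\psi_j\rangle_{L^2(\omega)}^2\le\mu_{N+1}^{-1}\|\nabla y\|_{L^2(\omega)}^2$ combined with the energy estimate; absorption of $\hat b_N$; and finally the implication (ii)$\Rightarrow$(i) of Theorem~\ref{thm 2} with $\alpha_1=\alpha_2=\alpha$. The only cosmetic difference is that you apply the truncated estimate directly at level $N$ using monotonicity of $\mu_j$, whereas the paper applies it at $N_*$ and then enlarges the projection to $\mathsf P_N$.
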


As in the previous section, the proof relies on a weak observability estimate for the adjoint equation. More precisely, we consider
\begin{equation}\label{para adjoint}
\begin{cases}
    -\partial_ty-\mathcal L(t)^*y=0,\quad x\in D,\;t\in(s,T),\\
    y|_{\partial D}=0,\\
    y(T)=y_T\in L^2(D).
\end{cases}
\end{equation}
Here
\begin{equation*}
    \mathcal L(t)^*y=\sum_{i,j=1}^d \partial_{x_j}\left(a_{ij}(t,x)\partial_{x_i}y\right)-\sum_{j=1}^d\partial_{x_j}\big(a_j(t,x)y\big)+a_0(t,x)y.
\end{equation*}

We first recall the following observability estimate obtained from the parabolic Carleman inequality in \cite[Lemma 1.2]{FI-96}.

\begin{lemma}\label{lemma para OI}
For any $T>s\geq0$, there exists a constant $C_{s,T}>0$ such that, for any
$y_T\in L^2(D)$, the solution $y$ of equation \eqref{para adjoint} satisfies
\begin{equation*}
    \|y(s)\|_{L^2(D)}^2\leq C_{s,T}\int_s^{T}\int_{\omega}|y(t,x)|^2dxdt.
\end{equation*}
\end{lemma}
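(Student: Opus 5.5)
The plan is the classical route from a global parabolic Carleman estimate to observability, as in Fursikov--Imanuvilov \cite{FI-96}.

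\textbf{Step 1: put the adjoint equation in Carleman-ready form.} Since $a_j\in C^{0,1}$, expand the adjoint lower-order term as
\begin{equation*}
\partial_{x_j}(a_jy)=a_j\partial_{x_j}y+(\partial_{x_j}a_j)y,
\end{equation*}
where $\partial_{x_j}a_j\in L^\infty$. Then \eqref{para adjoint} reads
\begin{equation*}
-\partial_ty-\sum_{i,j}\partial_{x_j}(a_{ij}\partial_{x_i}y)=-\sum_j a_j\partial_{x_j}y+\widetilde a_0\,y,\qquad \widetilde a_0:=a_0-\sum_j\partial_{x_j}a_j\in L^\infty .
\end{equation*}
This is a backward uniformly parabolic equation with a $C^{1,2}$ symmetric principal part and bounded first- and zero-order perturbations. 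By density of smooth data and continuous dependence of both sides of the inequality on $y_T\in L^2(D)$, it suffices to treat smooth $y_T$, so that $y$ is a strong solution.

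\textbf{Step 2: the global Carleman estimate.} Choose $\omega_0\Subset\omega$ and $\psi\in C^2(\overline D)$ with
\begin{equation*}
\psi>0 \text{ in } D,\qquad \psi|_{\partial D}=0,\qquad |\nabla\psi|>0 \text{ on } \overline{D\setminus\omega_0}.
\end{equation*}
Set
\begin{equation*}
\phi(t,x)=\frac{e^{\lambda\psi(x)}}{(t-s)(T-t)},\qquad \eta(t,x)=\frac{e^{2\lambda\|\psi\|_\infty}-e^{\lambda\psi(x)}}{(t-s)(T-t)}.
\end{equation*}
The FI-96 estimate then gives, for $\lambda$ large and $\sigma\ge\sigma_0(T-s)$,
\begin{equation*}
\int_s^T\!\!\int_D\big(\sigma\lambda^2\phi|\nabla y|^2+\sigma^3\lambda^4\phi^3|y|^2\big)e^{-2\sigma\eta}\,dx\,dt\le C\int_s^T\!\!\int_{\omega}\sigma^3\lambda^4\phi^3|y|^2e^{-2\sigma\eta}\,dx\,dt .
\end{equation*}
It is derived by conjugating with $e^{-\sigma\eta}$ and integrating by parts. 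The first-order term $a_j\partial_{x_j}y$ and the potential $\widetilde a_0y$ are absorbed into the left side for $\sigma$ large, since $\|a_j\|_\infty$ and $\|\widetilde a_0\|_\infty$ are finite. The time regularity of $a_{ij}$ is exactly what controls the commutator terms involving $\partial_ta_{ij}$.

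\textbf{Step 3: restrict to a middle time window.} On $[s+\tfrac{T-s}{4},\,s+\tfrac{3(T-s)}{4}]\times D$ the weights $\phi^3e^{-2\sigma\eta}$ are bounded below by a positive constant. On all of $(s,T)\times\omega$ the quantity $\phi^3e^{-2\sigma\eta}$ is bounded above. Hence
\begin{equation*}
\int_{s+(T-s)/4}^{s+3(T-s)/4}\|y(t)\|_{L^2(D)}^2\,dt\le C_{s,T}\int_s^T\!\!\int_\omega|y|^2\,dx\,dt .
\end{equation*}

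\textbf{Step 4: backward energy estimate and conclusion.} Multiply the adjoint equation by $y$ and use ellipticity and the boundedness of the coefficients (Young's inequality on the first-order term) to get
\begin{equation*}
-\frac{d}{dt}\|y(t)\|^2\le C\|y(t)\|^2 .
\end{equation*}
Integrating from $s$ to $t$ gives $\|y(s)\|^2\le e^{C(T-s)}\|y(t)\|^2$ for every $t\in[s,T]$. Averaging this over the middle window and combining with Step 3 yields the lemma.

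\textbf{Main obstacle.} The only delicate point is Step 2: the Carleman estimate with time-dependent principal coefficients and a divergence-form drift. I would quote it from \cite{FI-96}. If a self-contained argument is needed, I would first check that the $\partial_ta_{ij}$ and $\nabla_xa_{ij}$ terms from the conjugation are of lower order in $(\sigma,\lambda)$, namely $O(\sigma\lambda\phi)$ against the dominant $\sigma^3\lambda^4\phi^3$, and can therefore be absorbed.
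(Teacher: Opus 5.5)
Your proposal is correct and follows the same route as the paper, which does not prove this lemma itself but quotes it as the observability consequence of the parabolic Carleman inequality in \cite[Lemma 1.2]{FI-96}. Your Steps 1--4 are the standard deduction behind that citation: rewriting the divergence-form drift using $a_j\in C^{0,1}$, absorbing the lower-order terms in the global Carleman estimate, restricting to a middle time window where the weights are bounded below, and returning to $t=s$ by the backward energy estimate.
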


We next derive a truncated version of the observability inequality.

\begin{proposition}\label{prop para truncated OI}
For any $T>s\geq0$, there exists a constant $C_{s,T}>0$ such that, for any
$N\in\N^+$ and $y_T\in L^2(D)$, the solution $y$ of equation \eqref{para adjoint} satisfies
\begin{equation*}
    \|y(s)\|_{L^2(D)}^2\leq C_{s,T}\left(\|\mathsf P_N(\mathbf 1_{\omega}y)\|_{L^2(s,T;L^2(\omega))}^2+\mu_{N+1}^{-1}\|y_T\|_{L^2(D)}^2\right),
\end{equation*}
where $\mathsf P_N$ denotes the orthogonal projection from $L^2(\omega)$ onto
$\operatorname{span}\{\psi_j:1\leq j\leq N\}$.
\end{proposition}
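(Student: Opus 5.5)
The argument follows the proof of Proposition~\ref{prop oseen truncated OI} almost verbatim, with the Carleman-based observability of Lemma~\ref{lemma para OI} replacing the Oseen estimate. First we invoke Lemma~\ref{lemma para OI}:
\begin{equation*}
    \|y(s)\|_{L^2(D)}^2\leq C_{s,T}\|\mathbf 1_\omega y\|_{L^2(s,T;L^2(\omega))}^2 .
\end{equation*}
We then split $\mathbf 1_\omega y=\mathsf P_N(\mathbf 1_\omega y)+(I-\mathsf P_N)(\mathbf 1_\omega y)$ orthogonally in $L^2(\omega)$. Since $\{\psi_j\}$ are Neumann eigenfunctions on $\omega$ and $y(t)\in H^1_0(D)$ restricts to $H^1(\omega)$, the spectral representation $\|\nabla y(t)\|_{L^2(\omega)}^2=\sum_j\mu_j\langle y(t),\psi_j\rangle_{L^2(\omega)}^2$ gives, for a.e.\ $t$,
\begin{equation*}
    \|(I-\mathsf P_N)(\mathbf 1_\omega y)(t)\|_{L^2(\omega)}^2\leq\mu_{N+1}^{-1}\|\nabla y(t)\|_{L^2(D)}^2 .
\end{equation*}
Integrating over $(s,T)$ reduces the claim to the energy bound $\int_s^T\|\nabla y(t)\|_{L^2(D)}^2dt\leq C_{s,T}\|y_T\|_{L^2(D)}^2$.

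For this energy bound, reverse time by setting $z(t)=y(T-t)$ on $t\in(0,T-s)$. Then $z$ solves a forward uniformly parabolic equation with Dirichlet condition, principal part $\partial_{x_j}(a_{ij}\partial_{x_i}z)$, a divergence-form first-order term $-\partial_{x_j}(a_jz)$, and zeroth-order term $a_0z$. Test with $z$ and integrate by parts. The principal part gives $-\int a_{ij}\partial_iz\,\partial_jz\leq-\theta\|\nabla z\|^2$. The first-order term becomes $\int a_j z\,\partial_jz$, which is bounded by $\|a_j\|_\infty\|z\|\|\nabla z\|\leq\frac\theta2\|\nabla z\|^2+C\|z\|^2$ using Young. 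The zeroth-order term is bounded by $\|a_0\|_\infty\|z\|^2$. Hence
\begin{equation*}
    \frac{d}{dt}\|z\|^2+\theta\|\nabla z\|^2\leq C\|z\|^2 ,
\end{equation*}
with $C$ depending only on $\theta$ and on the sup norms of the coefficients on $[0,T]$. Gronwall then yields $\|z(t)\|^2\leq e^{Ct}\|y_T\|^2$ and $\int_0^{T-s}\|\nabla z\|^2\leq\theta^{-1}(1+C(T-s)e^{C(T-s)})\|y_T\|^2$.

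Combining the three displays gives the proposition with a constant $C_{s,T}$ independent of $N$ and $y_T$.

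The main technical point is justifying these computations for weak solutions with $y_T\in L^2(D)$ only. To handle it, first argue for smooth data $y_T\in H^1_0(D)$, where the regularity assumptions on $a_{ij}$ and $a_j$ give strong solutions. Then pass to general $y_T$ by density, using continuity of the solution map $L^2(D)\to C([s,T];L^2(D))\cap L^2(s,T;H^1_0(D))$, which the same energy estimate provides. The fact that $\nabla y(t)|_\omega$ controls the Neumann spectral sum also needs only $y(t)\in H^1(\omega)$, which holds for a.e.\ $t$.
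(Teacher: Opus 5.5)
Your proposal is correct and follows the paper's proof step for step: you apply Lemma~\ref{lemma para OI}, split orthogonally with the Neumann spectral bound $\mu_{N+1}^{-1}\|\nabla y(t)\|_{L^2(D)}^2$ on the high modes, and finish with the energy estimate for the time-reversed forward parabolic equation. You give more detail than the paper does (the explicit Young/Gronwall computation and the density argument for $L^2$ terminal data), and all of it checks out.
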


\begin{proof}
By Lemma \ref{lemma para OI}, we have
\begin{equation*}
    \|y(s)\|_{L^2(D)}^2\leq C_{s,T}\|y\|_{L^2(s,T;L^2(\omega))}^2.
\end{equation*}
Since $\mathsf P_N$ is the orthogonal projection in $L^2(\omega)$, it follows that
\begin{align*}
    \|y(s)\|_{L^2(D)}^2&\leq C_{s,T}\left(\|\mathsf P_N(\mathbf 1_{\omega}y)\|_{L^2(s,T;L^2(\omega))}^2+\|(I-\mathsf P_N)(\mathbf 1_{\omega}y)\|_{L^2(s,T;L^2(\omega))}^2\right)\\
    &\leq C_{s,T}\left(\|\mathsf P_N(\mathbf 1_{\omega}y)\|_{L^2(s,T;L^2(\omega))}^2+\mu_{N+1}^{-1}\|y\|_{L^2(s,T;H^1(D))}^2\right).
\end{align*}
Here the estimate
\begin{equation*}
    \|(I-\mathsf P_N)(\mathbf 1_{\omega}y)\|_{L^2(s,T;L^2(\omega))}^2\leq\mu_{N+1}^{-1}\|y\|_{L^2(s,T;H^1(D))}^2 
\end{equation*}
follows from the same arguments as in the proof of Proposition \ref{prop oseen truncated OI}. Moreover, by the standard energy estimate for equation \eqref{para adjoint}, one has
\begin{equation*}
    \|y\|_{L^2(s,T;H^1(D))}^2\leq C_{s,T}\|y_T\|_{L^2(D)}^2.
\end{equation*}
Indeed, after the time reversal $z(r)=y(T-r)$, the function $z$ solves a forward uniformly parabolic equation with initial data $z(0)=y_T$. Testing it by $z$ and using the ellipticity of the principal part, and the boundedness of the lower order coefficients, we derive
\begin{equation*}
    \sup_{0\leq r\leq T-s}\|z(r)\|_{L^2(D)}^2+\int_0^{T-s}\|z(r)\|_{H^1(D)}^2dr\leq C_{s,T}\|y_T\|_{L^2(D)}^2.
\end{equation*}
Returning to $y$ and combining the above estimates then completes the proof.
\end{proof}

Combining the truncated observability inequality with the non-autonomous linear criterion obtained in Section~\ref{Sec 2}, we now prove Theorem~\ref{thm 5}.

\begin{proof}[Proof of Theorem~\ref{thm 5}]
To align with the abstract setting in Section~\ref{Sec 2}, set
\begin{equation*}
    H=L^2(D),\quad U=\mathbb R^N,\quad G(t)=0,\quad W_t^N=(\beta_1(t),\cdots,\beta_N(t)).
\end{equation*}
Let $A(t)=\mathcal L(t)$ with the Dirichlet boundary condition on $D$. Under the assumptions on the coefficients of $\mathcal L(t)$, the family $\{A(t)\}_{t\geq0}$ generates an evolution family $\{U(t,s):0\leq s\leq t\}$ on $L^2(D)$. Moreover, equation \eqref{para equation} can be written as
\begin{equation*}
    dX(t)=A(t)X(t)dt+BdW_t^N,\quad X(s)=u_s,
\end{equation*}
where
\begin{equation*}
    B\colon\mathbb R^N\rightarrow L^2(D),\quad B\xi=\sum_{1\leq j\leq N}b_j\xi_j\psi_j, \qquad\xi=(\xi_1,\cdots,\xi_N)\in\mathbb R^N .
\end{equation*}
Then $B\in\mathcal L(\mathbb R^N;L^2(D))$. Since $U$ is finite dimensional, the noise is $L^2(D)$-valued and the Hilbert--Schmidt admissibility condition in \eqref{abs equation2} is automatically satisfied. Thus equation \eqref{para equation} fits into the abstract setting of \eqref{abs equation2}.

\vspace{2mm}
Applying Proposition~\ref{prop para truncated OI} with $N=N_*$ and using $N\geq N_*$, we obtain
\begin{align*}
    \|y(s)\|_{L^2(D)}^2&\leq C_{s,T}\left(\|\mathsf P_{N_*}(\mathbf 1_{\omega}y)\|_{L^2(s,T;L^2(\omega))}^2+\mu_{N_*+1}^{-1}\|y_T\|_{L^2(D)}^2\right) \\
    &\leq C_{s,T}\|\mathsf P_N(\mathbf 1_{\omega}y)\|_{L^2(s,T;L^2(\omega))}^2+\alpha^2\|y_T\|_{L^2(D)}^2,
\end{align*}
where the last inequality follows from \eqref{eq N star para}.

Set $\hat b_N:=\min_{1\leq j\leq N}|b_j|$. By the assumption on $\{b_j\}_{1\leq j\leq N}$, $\hat b_N>0$. Moreover,
\begin{align*}
    \|\mathsf P_N(\mathbf 1_{\omega}y)\|_{L^2(s,T;L^2(\omega))}^2=\int_s^T\sum_{1\leq j\leq N}\langle y(t),\psi_j\rangle_{L^2(\omega)}^2dt\leq\hat b_N^{-2}\int_s^T\sum_{1\leq j\leq N}b_j^2\langle y(t),\psi_j\rangle_{L^2(\omega)}^2dt.
\end{align*}
For any $z\in L^2(D)$, by the definition of $B$, one has
\begin{equation*}
    B^*z=\left(b_1\langle z,\psi_1\rangle_{L^2(D)},\cdots,b_N\langle z,\psi_N\rangle_{L^2(D)}\right).
\end{equation*}
Therefore,
\begin{equation*}
    \|y(s)\|_{L^2(D)}^2\leq C_{s,T}\hat b_N^{-2}\|B^*y\|_{L^2(s,T;\mathbb R^N)}^2+\alpha^2\|y_T\|_{L^2(D)}^2.
\end{equation*}
This implies the weak observability inequality required in Theorem~\ref{thm 2}. The conclusion then follows from Theorem~\ref{thm 2}. The proof is complete.
\end{proof}

\section{The parabolic Sine--Gordon equation with localized noise} \label{Sec 6}

In this section, we consider a stochastic parabolic Sine--Gordon equation and show that the abstract criterion in Theorem~\ref{thm 3} can be applied to deduce the asymptotic strong Feller property.

\vspace{2mm}

The parabolic Sine--Gordon equation with localized white noise  is given by
\begin{equation}\label{sine equation}
\begin{cases}
   \partial_{t}u-\Delta u+\kappa\sin u=\frac{dW}{dt}(t,x),\quad x\in D,\; t>0,\\
    u|_{\partial D}=0,\\
    u(0)=u_0.
\end{cases}
\end{equation}
Here $D\subset\mathbb R^3$ is a bounded domain with smooth boundary, $u_0\in L^2(D)$ and $\kappa\in\R$ is an arbitrary parameter.

The localized random force $W(t,x)$ is given by
\begin{equation}\label{eq noise sine}
    W(t,x)=\sum_{1\leq j\leq N}b_j\beta_j(t)\psi_j(x).
\end{equation}
Here $N\in\N^+$, $b_j$ are nonzero real numbers, and $\beta_j(t)$ are independent standard Brownian motions. The functions $\psi_j$ are supported in a nonempty smooth subdomain $\omega\subset D$. More precisely, as in Section~\ref{Sec Oseen}, let $\{\tilde{\psi}_j\}_{j\in\N^+}$ be an orthonormal basis of $L^2(\omega)$ consisting of eigenfunctions of the Neumann Laplacian on $\omega$, with eigenvalues $\mu_j$, $\mu_j\rightarrow\infty$. Then set each $\psi_j\in L^2(D)$ to be the zero extension of $\tilde{\psi}_j$.

Under these settings,   for any $u_0\in L^2(D)$ and $T>0$, equation \eqref{sine equation} admits a unique mild solution $u(\cdot)\in L^2(\Omega;C([0,T];L^2(D)))$.

\begin{theorem}\label{thm 4}
There exists a constant $C>0$ with the following property. For any $T>0$, $\kappa\in\R$ and $\alpha\in(0,1)$, set
\begin{equation*}
    N_*(\kappa,T,\alpha):=\min\left\{N\in\N^+:\mu_{N+1}\geq C\alpha^{-2}\exp\left(C\left(T^{-1}+T|\kappa|+|\kappa|^{2/3}\right)\right)\right\}.
\end{equation*}
If the integer $N$ in \eqref{eq noise sine} satisfies $N\geq N_*$, then the Markov semigroup $\{P_t\}_{t\geq0}$ associated with equation \eqref{sine equation} on $L^2(D)$ is $\alpha'$-asymptotically regular in time $T$ for any $\alpha'\in(\alpha,1)$.
\end{theorem}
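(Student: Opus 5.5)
We verify the hypotheses of Theorem~\ref{thm 3} with $H=L^2(D)$, $U=\mathbb R^N$, $A=\Delta$ with Dirichlet boundary condition, $F(u)=-\kappa\sin u$ and $B\xi=\sum_{j\le N}b_j\xi_j\psi_j$. The conclusion then follows with $\alpha'\in(\alpha,1)$.

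\textbf{Structural assumptions.} $F$ has linear growth, and $\nabla F(u)h=-\kappa\cos(u)h$ is a multiplication operator with norm at most $|\kappa|$; it is continuous in the strong operator topology by dominated convergence. Take $V=H^1_0(D)$, so that $\|S(t)\|_{\mathcal L(H;V)}\le C_Tt^{-1/2}$. This gives $\gamma=1/2$, which is not allowed, so instead I would take $V=H^{2\gamma}$ with $\gamma\in(3/8,1/2)$, so that $V\hookrightarrow L^4(D)$ in dimension $3$ (this needs $2\gamma\ge 3/4$). Then
\[
\|\nabla^2F(u)(h_1,h_2)\|_{L^2}=\|\kappa\sin(u)h_1h_2\|_{L^2}\le|\kappa|\|h_1\|_{L^4}\|h_2\|_{L^4}\le C|\kappa|\|h_1\|_V\|h_2\|_V,
\]
which is \eqref{eq F second V}. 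Since $U$ is finite dimensional, $B\in\mathcal L_2(U;H)$.

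\textbf{Uniform weak observability.} For each $\xi\in C([0,T];H)$ the adjoint of $\mathcal U_\xi$ solves
\[
-\partial_ty-\Delta y+a(t,x)y=0,\qquad a=\kappa\cos\xi,\qquad \|a\|_{L^\infty}\le|\kappa|.
\]
The quantitative Carleman observability for heat equations with bounded potentials (Fern\'andez-Cara--Zuazua, Duyckaerts--Zhang--Zuazua) gives
\[
\|y(0)\|^2\le \exp\bigl(C(1+T^{-1}+T\|a\|_\infty+\|a\|_\infty^{2/3})\bigr)\int_0^T\!\!\int_\omega|y|^2\,dx\,dt,
\]
with a constant that depends on $a$ only through $\|a\|_\infty$, hence uniformly in $\xi$. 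This uniformity is the key point. I then repeat the proof of Proposition~\ref{prop para truncated OI}. The projection splitting gives a remainder $\mu_{N+1}^{-1}\|y\|_{L^2(0,T;H^1)}^2$. The energy estimate gives $\|y\|^2_{L^2(0,T;H^1)}\le e^{CT|\kappa|}\|y_T\|^2$, again uniformly in $\xi$. Hence
\[
\|y(0)\|^2\le K\Bigl(\|\mathsf P_N(\mathbf 1_\omega y)\|^2_{L^2(0,T;L^2(\omega))}+\mu_{N+1}^{-1}\|y_T\|^2\Bigr),\qquad K=C\exp\bigl(C(T^{-1}+T|\kappa|+|\kappa|^{2/3})\bigr).
\]
With $N\ge N_*$ we have $K\mu_{N+1}^{-1}\le\alpha^2$. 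Dividing by $\hat b_N=\min_{j\le N}|b_j|>0$, as in the proof of Theorem~\ref{thm 5}, yields \eqref{eq L obs} with constant $\sqrt K/\hat b_N$ and remainder $\alpha$, uniformly in $\xi$.

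\textbf{Main obstacle.} The delicate step is the uniform-in-$\xi$ observability constant with the stated explicit dependence on $|\kappa|$ and $T$. This is exactly the content of the known Carleman estimates for potentials, and I would cite them rather than reprove them. A secondary point is the choice of $V$, which must satisfy $\gamma<1/2$ together with the $L^4$ embedding in three dimensions. Finally, Theorem~\ref{thm 3} gives the conclusion.
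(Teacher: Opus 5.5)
Your proposal is correct and follows the paper's proof essentially step by step. You verify the hypotheses of Theorem~\ref{thm 3} with $V$ a fractional space of order $2\gamma\in(3/4,1)$; this is the same as the paper's $V=D((-A)^{\rho/2})$ with $\rho\in(3/4,1)$. You then get the $\xi$-uniform weak observability from the Fern\'andez-Cara--Zuazua estimate with potential $\kappa\cos\xi$ bounded by $|\kappa|$, combined with the truncation and energy argument of Proposition~\ref{prop truncated OI}, the choice of $N_*$, and division by $\hat b_N$.
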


As in the previous sections, we consider the adjoint equation of the linearized parabolic Sine--Gordon equation. Let $g\in L^\infty((0,T)\times D)$ and consider
\begin{equation}\label{y equation}
\begin{cases}
    -\partial_ty-\Delta y+gy=0,\quad x\in D,\;t\in(0,T),\\
    y|_{\partial D}=0,\\
    y(T)=y_T\in L^2(D).
\end{cases}
\end{equation}

We recall the following observability inequality, established in  \cite[Theorem 1.2]{FCZ-00}.

\begin{lemma}\label{lemma OI}
There exists a constant $C>0$, such that for any $T>0$, 
$g\in L^\infty((0,T)\times D)$ and $y_T\in L^2(D)$, the solution $y$ of equation \eqref{y equation} satisfies
\begin{equation*}
    \|y(0)\|_{L^2(D)}^2\leq C\exp\left(C\left(T^{-1}+T\|g\|_{\infty}+\|g\|_{\infty}^{2/3}\right)\right)  \int_0^T\int_{\omega}|y(t,x)|^2dxdt.
\end{equation*}
\end{lemma}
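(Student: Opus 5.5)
This is the global observability estimate of Fernández-Cara--Zuazua. The plan is to rederive it from a global Carleman inequality of Fursikov--Imanuvilov type, tracking explicitly how the constants depend on $T$ and on $\|g\|_\infty$. The proof has three stages: a Carleman estimate with the potential absorbed, a backward energy estimate, and a comparison of weights.

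\textbf{Carleman weights and estimate.} Fix a nonempty open set $\omega_0\Subset\omega$ and a function $\psi\in C^2(\overline D)$ with $\psi>0$ in $D$, $\psi=0$ on $\partial D$, and $|\nabla\psi|>0$ in $\overline{D\setminus\omega_0}$. For $\lambda\ge\lambda_0$ (fixed once and for all, depending only on $D$ and $\omega$) set
\begin{equation*}
\phi(t,x)=\frac{e^{\lambda\psi(x)}}{t(T-t)},\qquad \eta(t,x)=\frac{e^{2\lambda\|\psi\|_\infty}-e^{\lambda\psi(x)}}{t(T-t)}.
\end{equation*}
Write the adjoint equation as $-\partial_t y-\Delta y=f:=-gy$. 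The standard Carleman inequality then gives, for all $s\ge s_0:=C(T+T^2)$,
\begin{equation*}
\iint_{(0,T)\times D} e^{-2s\eta}\,s^3\phi^3|y|^2\le C\iint_{(0,T)\times D} e^{-2s\eta}|g y|^2+C\iint_{(0,T)\times\omega} e^{-2s\eta}\,s^3\phi^3|y|^2 .
\end{equation*}
Since $\phi\ge 4T^{-2}$, we have $s^3\phi^3\ge c\,s^3T^{-6}$. The potential term can therefore be absorbed into the left-hand side once $s^3T^{-6}\ge C\|g\|_\infty^2$. So we take
\begin{equation*}
s=C\bigl(T+T^2+T^2\|g\|_\infty^{2/3}\bigr),
\end{equation*}
which removes the first term on the right.

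\textbf{Backward energy estimate.} Multiplying the equation by $y$ gives, for $0\le t\le T$,
\begin{equation*}
\|y(0)\|_{L^2(D)}^2\le e^{2T\|g\|_\infty}\|y(t)\|_{L^2(D)}^2 .
\end{equation*}
Averaging this over $t\in[T/4,3T/4]$ yields
\begin{equation*}
\|y(0)\|^2\le \frac{2}{T}\,e^{2T\|g\|_\infty}\int_{T/4}^{3T/4}\|y(t)\|^2\,dt .
\end{equation*}

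\textbf{Comparing weights.} On $[T/4,3T/4]\times D$ the weight satisfies $e^{-2s\eta}s^3\phi^3\ge c\,s^3T^{-6}e^{-Cs/T^2}$. On the whole strip $(0,T)\times\omega$ it satisfies $e^{-2s\eta}s^3\phi^3\le C\,s^3T^{-6}$, because $\eta\ge c/(t(T-t))$ makes the product bounded uniformly in $t$. Combining the three estimates gives
\begin{equation*}
\|y(0)\|^2\le \frac{C}{T}\,e^{2T\|g\|_\infty}\,e^{Cs/T^2}\iint_{(0,T)\times\omega}|y|^2 .
\end{equation*}
Here $s/T^2=C(T^{-1}+1+\|g\|_\infty^{2/3})$, the factor $T^{-1}$ is absorbed into $e^{C/T}$, and the additive constant goes into $C$. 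This yields exactly $C\exp\bigl(C(T^{-1}+T\|g\|_\infty+\|g\|_\infty^{2/3})\bigr)$.

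\textbf{Main obstacle.} The delicate point is the Carleman inequality itself with the sharp threshold $s_0=C(T+T^2)$ and with $\lambda$ independent of $T$ and $g$. This requires carefully tracking the powers of $t(T-t)$ generated by $\partial_t\eta$ in the conjugated operator. The rest is the bookkeeping above. Finally, the estimate for $y_T\in L^2(D)$ is obtained by density, starting from smooth data.
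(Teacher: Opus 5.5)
The paper gives no argument of its own for this lemma and simply cites \cite[Theorem 1.2]{FCZ-00}. Your proposal is correct and reconstructs precisely the Fern\'andez-Cara--Zuazua proof behind that citation: a Fursikov--Imanuvilov Carleman estimate with $s\simeq T+T^2+T^2\|g\|_\infty^{2/3}$ to absorb $gy$, the backward energy bound $\|y(0)\|^2\le e^{2T\|g\|_\infty}\|y(t)\|^2$ averaged over $[T/4,3T/4]$, and the weight comparison. Your bookkeeping is right; in particular, on $(0,T)\times\omega$ the product $e^{-2s\eta}s^3\phi^3$ is bounded by an absolute constant, which is $\le Cs^3T^{-6}$ because $s\ge CT^2$, so you get exactly $C\exp\bigl(C(T^{-1}+T\|g\|_\infty+\|g\|_\infty^{2/3})\bigr)$.
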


The following proposition is a truncated version of the observability inequality.

\begin{proposition}\label{prop truncated OI}
There exists a constant $C>0$, such that for any
$T>0$, $g\in L^\infty((0,T)\times D)$, $N\in\N^+$ and $y_T\in L^2(D)$, the solution $y$ of equation \eqref{y equation} satisfies
\begin{equation*}
    \|y(0)\|_{L^2(D)}^2\leq C\exp\left(C\left(T^{-1}+T\|g\|_{\infty}+\|g\|_{\infty}^{2/3}\right)\right)\left(\|\mathsf P_N(\mathbf 1_{\omega}y)\|_{L^2(0,T;L^2(\omega))}^2+\mu_{N+1}^{-1}\|y_T\|_{L^2(D)}^2\right),
\end{equation*}
where $\mathsf P_N$ denotes the orthogonal projection from $L^2(\omega)$ onto
$\operatorname{span}\{\psi_j:1\leq j\leq N\}$.
\end{proposition}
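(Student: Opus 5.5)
The plan is to follow the same scheme as Propositions~\ref{prop oseen truncated OI} and \ref{prop para truncated OI}, while tracking explicitly how every constant depends on $T$ and $\|g\|_\infty$. Write $K:=C\exp\bigl(C(T^{-1}+T\|g\|_\infty+\|g\|_\infty^{2/3})\bigr)$ for the constant in Lemma~\ref{lemma OI}.

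\textbf{Step 1 (split the observation).} Starting from Lemma~\ref{lemma OI}, use that $\mathsf P_N$ is an orthogonal projection in $L^2(\omega)$ to write
\begin{equation*}
\|y(0)\|_{L^2(D)}^2\leq K\left(\|\mathsf P_N(\mathbf 1_\omega y)\|_{L^2(0,T;L^2(\omega))}^2+\|(I-\mathsf P_N)(\mathbf 1_\omega y)\|_{L^2(0,T;L^2(\omega))}^2\right).
\end{equation*}

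\textbf{Step 2 (control the high modes).} Since $\{\tilde\psi_j\}$ are Neumann eigenfunctions on $\omega$, expanding in this basis gives, for a.e.\ $t$,
\begin{equation*}
\|(I-\mathsf P_N)(\mathbf 1_\omega y)(t)\|_{L^2(\omega)}^2\leq\mu_{N+1}^{-1}\|\nabla y(t)\|_{L^2(\omega)}^2 .
\end{equation*}
This holds because $y(t)|_\omega\in H^1(\omega)$, which is the form domain of the Neumann Laplacian. Integrating in time, the remainder is bounded by $\mu_{N+1}^{-1}\|\nabla y\|_{L^2(0,T;L^2(D))}^2$.

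\textbf{Step 3 (energy estimate).} Set $z(t)=y(T-t)$, so that $\partial_t z-\Delta z+g(T-t)z=0$ with $z(0)=y_T$. Testing the equation with $z$ gives
\begin{equation*}
\tfrac12\tfrac{d}{dt}\|z\|^2+\|\nabla z\|^2\leq\|g\|_\infty\|z\|^2 .
\end{equation*}
Gronwall's lemma then yields $\|z(t)\|^2\leq e^{2\|g\|_\infty t}\|y_T\|^2$. Integrating the same inequality over $(0,T)$ gives
\begin{equation*}
\int_0^T\|\nabla z\|^2dt\leq\tfrac12\bigl(1+2T\|g\|_\infty e^{2T\|g\|_\infty}\bigr)\|y_T\|^2\leq e^{4T\|g\|_\infty}\|y_T\|^2 .
\end{equation*}

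\textbf{Step 4 (combine).} Putting Steps 1--3 together gives
\begin{equation*}
\|y(0)\|^2\leq K\|\mathsf P_N(\mathbf 1_\omega y)\|^2_{L^2(0,T;L^2(\omega))}+K e^{4T\|g\|_\infty}\mu_{N+1}^{-1}\|y_T\|^2 .
\end{equation*}
The factor $e^{4T\|g\|_\infty}$ is absorbed into $K$ by enlarging the absolute constant $C$ in the exponent, for instance replacing $C$ by $C+4$. The resulting constant stays independent of $T$, $g$, $N$ and $y_T$.

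\textbf{Main obstacle.} The only delicate point is this uniformity in Step 4. The energy constant must grow at most like $\exp(cT\|g\|_\infty)$, so that it fits inside the Fern\'andez-Cara--Zuazua form of the constant. The Gronwall bound of Step 3 is of exactly this type, so the absorption is legitimate.
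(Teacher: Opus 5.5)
Your proof is correct and follows the same route as the paper: Lemma~\ref{lemma OI}, the orthogonal splitting with $\mathsf P_N$, the Neumann spectral bound on the high modes, and the time-reversed energy estimate with growth $\exp(CT\|g\|_\infty)$, which is absorbed into the constant. Your explicit bound $e^{4T\|g\|_\infty}$, using only $\|\nabla y\|$ rather than the full $H^1$ norm, simply makes precise the step the paper cites as a standard energy estimate.
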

 
\begin{proof}
Set $K_{T,g}:=C\exp\left(C\left(T^{-1}+T\|g\|_{\infty}+\|g\|_{\infty}^{2/3}\right)\right)$. By Lemma \ref{lemma OI}, we have
\begin{align*}
    \|y(0)\|_{L^2(D)}^2&\leq K_{T,g}\|y\|_{L^2(0,T;L^2(\omega))}^2\\
    &=K_{T,g}\left(\|\mathsf P_N(\mathbf 1_{\omega}y)\|_{L^2(0,T;L^2(\omega))}^2+\|(I-\mathsf P_N)(\mathbf 1_{\omega}y)\|_{L^2(0,T;L^2(\omega))}^2\right)\\
    &\leq K_{T,g}\left(\|\mathsf P_N(\mathbf 1_{\omega}y)\|_{L^2(0,T;L^2(\omega))}^2+\mu_{N+1}^{-1}\|y\|_{L^2(0,T;H^1(D))}^2\right).
\end{align*}
Here the estimate
\begin{equation*}
    \|(I-\mathsf P_N)(\mathbf 1_{\omega}y)\|_{L^2(0,T;L^2(\omega))}^2\leq\mu_{N+1}^{-1}\|y\|_{L^2(0,T;H^1(D))}^2 
\end{equation*}
follows from the same arguments as in the proof of Proposition \ref{prop oseen truncated OI}. Moreover, by the standard energy estimate for the backward parabolic equation \eqref{y equation}, one has
\begin{equation*}
    \|y\|_{L^2(0,T;H^1(D))}^2\leq C\exp(CT\|g\|_{\infty})\|y_T\|_{L^2(D)}^2.
\end{equation*}
Indeed, this follows by reversing time and testing the resulting forward parabolic equation by the solution itself. Combining the above estimates and enlarging the constant $C$ in $K_{T,g}$ if necessary, we obtain the desired inequality. The proof is complete.
\end{proof}

Combining the truncated observability inequality with the abstract criterion obtained in Section~\ref{Sec 3}, we now prove Theorem~\ref{thm 4}.

\begin{proof}[Proof of Theorem~\ref{thm 4}]
We verify the assumptions of Theorem~\ref{thm 3}. Set $H=L^2(D)$, $U=\mathbb R^N$, and let $A=\Delta$ with the Dirichlet boundary condition. Choose $V=D((-A)^{\rho/2})$ with $3/4<\rho<1$. Then $V\hookrightarrow L^4(D)$ and
$\|S(t)\|_{\mathcal L(H;V)}\leq C_Tt^{-\rho/2}$ for $0<t\leq T$; hence the smoothing assumption holds with $\gamma=\rho/2<1/2$.

For the parabolic Sine--Gordon equation, $F(u)=-\kappa\sin u$. Thus $F\colon H\to H$ is globally Lipschitz and has at most linear growth. Moreover,
$\nabla F(u)h=-\kappa\cos u\,h$ and
$\nabla^2F(u)(h_1,h_2)=\kappa\sin u\,h_1h_2$. Since $|\sin u|\leq1$ and $V\hookrightarrow L^4(D)$,
\begin{equation*}
    \|\nabla^2F(u)(h_1,h_2)\|_{L^2(D)}\leq C|\kappa|\|h_1\|_V\|h_2\|_V,\qquad h_1,h_2\in V .
\end{equation*}

Writing $W_t^N=(\beta_1(t),\cdots,\beta_N(t))$, define
\begin{equation*}
    B\xi=\sum_{1\leq j\leq N}b_j\xi_j\psi_j,\qquad\xi=(\xi_1,\cdots,\xi_N)\in\mathbb R^N.
\end{equation*}
Since $U=\mathbb R^N$ is finite dimensional, $B\in\mathcal L_2(U;H)$. Hence equation \eqref{sine equation} fits into the abstract framework of \eqref{nl equation}.

It remains to verify the weak observability condition. Let $\xi\in C([0,T];L^2(D))$ be arbitrary. The linearized equation along $\xi$ is
\begin{equation*}
    \partial_tv-\Delta v+\kappa\cos\xi\,v=0,\qquad v|_{\partial D}=0 .
\end{equation*}
The corresponding adjoint equation is \eqref{y equation} with $g=\kappa\cos\xi$, and therefore $ \|g\|_{L^\infty((0,T)\times D)}\leq|\kappa|$. 

By Proposition~\ref{prop truncated OI}, the definition of $B^*$, the fact that $b_j\neq0$ for $1\leq j\leq N$, and the choice of $N_*(\kappa,T,\alpha)$, the same argument as in the proofs of Theorems~\ref{thm oseen} and~\ref{thm 5} yields the $\alpha$-weak observability inequality \eqref{eq L obs}, uniformly in $\xi$. Thus all assumptions of Theorem~\ref{thm 3} are satisfied, and the Markov semigroup associated with \eqref{sine equation} is $\alpha'$-asymptotically regular in time $T$ for every $\alpha'\in(\alpha,1)$. This completes the proof.
\end{proof}
    
    \normalem
    \bibliographystyle{plain}
    \bibliography{References}

\end{document}